\documentclass[12pt,reqno]{amsart}

\usepackage{amsmath,amssymb,amsfonts,amsthm,latexsym,graphicx,multirow,hyperref,enumerate}
\usepackage[all]{xy}

\oddsidemargin=0.2in
\evensidemargin=0.2in
\topmargin=-0.2in
\textwidth=15.6cm
\textheight=23cm 

\newcommand\A{\mathrm{A}}   \newcommand\Alt{\mathrm{Alt}} \newcommand\Aut{\mathrm{Aut}}
 
\newcommand\C{\mathrm{C}} \newcommand\calB{\mathcal{B}} \newcommand\calD{\mathcal{D}} \newcommand\calO{\mathcal{O}} \newcommand\Cay{\mathrm{Cay}}   \newcommand\Cos{\mathsf{Cos}}
\newcommand\D{\mathrm{D}}  
\newcommand\E{\mathrm{E}}
  \newcommand\Fix{\mathrm{Fix}}
\newcommand\G{\mathrm{G}}

 \newcommand\magma{{\sc Magma} }

\newcommand\Nor{\mathbf{N}}

       \newcommand\PSL{\mathrm{PSL}}    \newcommand\PSp{\mathrm{PSp}} \newcommand\PSU{\mathrm{PSU}}

\newcommand\SL{\mathrm{SL}}  \newcommand\Soc{\mathrm{Soc}}     \newcommand\Sy{\mathrm{S}} \newcommand\Sym{\mathrm{Sym}}

\newtheorem{theorem}{Theorem}[section]
\newtheorem{lemma}[theorem]{Lemma}
\newtheorem{proposition}[theorem]{Proposition}

\newtheorem{problem}[theorem]{Problem}
\newtheorem{question}[theorem]{Question}

\theoremstyle{definition}

\newtheorem{construction}[theorem]{Construction}

\begin{document}

\title[Tetravalent half-arc-transitive graphs]{Tetravalent half-arc-transitive graphs with unbounded nonabelian vertex stabilizers}

\author[Xia]{Binzhou Xia}
\address{School of Mathematics and Statistics\\The University of Melbourne\\Parkville, VIC 3010\\Australia}
\email{binzhoux@unimelb.edu.au}


\begin{abstract}
Half-arc-transitive graphs are a fascinating topic which connects graph theory, Riemann surfaces and group theory. Although fruitful results have been obtained over the last half a century, it is still challenging to construct half-arc-transitive graphs with prescribed vertex stabilizers. Until recently, there have been only six known connected tetravalent half-arc-transitive graphs with nonabelian vertex stabilizers, and the question whether there exists a connected tetravalent half-arc-transitive graph with nonabelian vertex stabilizer of order $2^s$ for every $s\geqslant3$ has been wide open. This question is answered in the affirmative in this paper via the construction of a connected tetravalent half-arc-transitive graph with vertex stabilizer $\mathrm{D}_8^2\times\mathrm{C}_2^m$ for each integer $m\geqslant1$, where $\mathrm{D}_8^2$ is the direct product of two copies of the dihedral group of order $8$ and $\mathrm{C}_2^m$ is the direct product of $m$ copies of the cyclic group of order $2$. The graphs constructed have surprisingly many significant properties in various contexts.

\textit{Key words:} tetravalent; half-arc-transitive

\textit{MSC2010:} 20B25, 05C25
\end{abstract}

\maketitle

\section{Introduction}

Throughout this paper all graphs are assumed to be finite, simple and undirected. An \emph{arc} of a graph is an ordered pair of adjacent vertices of the graph. Let $\Gamma$ be a graph and let $X$ be a subgroup of the automorphism group $\Aut(\Gamma)$ of $\Gamma$. We say that $X$ is \emph{vertex-transitive}, \emph{edge-transitive} or \emph{arc-transitive} if $X$ acts transitively on the vertex set, edge set or arc set, respectively, of $\Gamma$. If $X$ is vertex-transitive and edge-transitive but not arc-transitive, then we say that $X$ is \emph{half-arc-transitive} and $\Gamma$ admits a \emph{half-arc-transitive group} $X$. The graph $\Gamma$ is said to be half-arc-transitive if $\Aut(\Gamma)$ is half-arc-transitive.

The study of half-arc-transitive graphs dates back to the 1960s, if not earlier, when Tutte in a book of himself proved that the valency of a half-arc-transitive graph must be even~(\cite[7.53]{Tutte1966}). In the same book he pointed out that it was not known whether such graphs exist~(\cite[p.~60]{Tutte1966}). This was solved a few years later by Bouwer~\cite{Bouwer1970}, who constructed half-arc-transitive graphs of valency $2k$ for all integers $k\geqslant2$ (see also~\cite{CZ2016}). Note that a graph of valency $2$ is a disjoint union of cycles and hence not half-arc-transitive. Thus the valency of a half-arc-transitive graph is at least $4$.

Over the last half a century, numerous papers have been published on half-arc-transitive graphs, for which the reader may refer to the survey papers~\cite{CPS2015,Marusic1998i} and recent papers~\cite{CC2018,FGLPRV2018,GHX2019,JMSV2019,RS2019}. On the one hand, it is not surprising that the majority of papers deal with the case of valency $4$, the smallest valency of half-arc-transitive graphs. On the other hand, surprisingly rich structures and connections to various branches of mathematics (a somewhat surprising example is maps on Riemann surfaces~\cite{MN1998}) have been revealed during the investigation of tetravalent half-arc-transitive graphs, so that Maru\v{s}i\v{c}~\cite[p.221]{Marusic1998i} described progress on studying them as ``thrilling''.

In the extensive study of tetravalent half-arc-transitive graphs, a large amount of work has been devoted to the construction of such graphs. One reason is that although it is fairly easy to construct tetravalent graphs admitting a half-arc-transitive group, constructing certain half-arc-transitive graphs often turns out to be challenging. For valency greater than $4$, it is easy to verify that the vertex stabilizers in Bouwer's examples are nonabelian (see~\cite{RS2017,ZZ2018} for the automorphism groups of these graphs and their natural generalizations). However, in the case of valency $4$, there was a long time for which all known examples of connected half-arc-transitive graphs had abelian vertex stabilizers. See~\cite{AMN1994,MM1999,Marusic2005,MX1997,Sajna1998} for some of these examples, and see~\cite{MN2001} for a description of the structure of vertex stabilizers in tetravalent half-arc-transitive graphs.

The first known example of a connected tetravalent half-arc-transitive graph with nonabelian vertex stabilizer was found by Conder and Maru\v{s}i\v{c}~\cite{CM2003} about twenty years ago. The graph has order $10752$ and vertex stabilizer $\D_8$, the dihedral group of order $8$. Inspired by a comment made by Maru\v{s}i\v{c} about this graph being somewhat unique, in a lecture at a workshop at the Fields Institute in October 2011, considerable effort has been made to enlarge the list of examples of half-arc-transitive graphs with nonabelian vertex stabilizers.

In 2015, Conder, Poto\v{c}nik and \v{S}parl~\cite{CPS2015} constructed another connected tetravalent half-arc-transitive graph with vertex stabilizer $\D_8$ and order $10752$. This graph and the previous graph of the same order turn out to be the only connected tetravalent half-arc-transitive graphs with vertex stabilizer $\D_8$ and order $10752$, with $10752$ being the smallest order of connected tetravalent half-arc-transitive graphs with vertex stabilizer $\D_8$ (see~\cite{PP2017}). Also constructed in~\cite{CPS2015} are two other connected tetravalent half-arc-transitive graphs with nonabelian vertex stabilizers, one with vertex stabilizer $\D_8$ and order $21870$ and the other with vertex stabilizer $\D_8\times\C_2$ and order $90\cdot3^{10}$, and no other half-arc-transitive graph with nonabelian vertex stabilizer was known up to~2015. Thus the authors in~\cite{CPS2015} asked the following question.

\begin{question}\label{que1}
Does there exist a connected tetravalent half-arc-transitive graph with nonabelian vertex stabilizer of order $2^s$, for every $s\geqslant3$?
\end{question}

Despite consistent effort made, the above question has been wide open so far. In fact, it has been even unknown whether there exist infinitely many connected tetravalent half-arc-transitive graphs with nonabelian vertex stabilizers. The only new examples since 2015 were given by Spiga~\cite{Spiga2016}, who constructed a connected tetravalent half-arc-transitive graph with vertex stabilizer $\D_8^2$ and with vertex stabilizer a nonabelian group of order $128$, respectively. Although the constructions were not successfully extended to any infinite family, Spiga commented in~\cite{Spiga2016} that they are believed to be ``only the tip of an iceberg'' (see~\cite{Conder2019,DJ2013,PS2019} for evidence).

In this paper, we construct a connected tetravalent half-arc-transitive graph with vertex stabilizer $\D_8^2\times\C_2^m$ for each integer $m\geqslant1$. Together with the above mentioned connected tetravalent half-arc-transitive graphs with vertex stabilizers $\D_8$, $\D_8\times\C_2$ and $\D_8^2$, this gives the affirmative answer to Question~\ref{que1} for $s\neq5$. For $s=5$, it is easy to construct a connected tetravalent graph admitting a half-arc-transitive action of $\A_{32}$ with vertex stabilizer $\D_8\times\C_2^2$, and then~\cite[Theorem 1.1]{SX} shows the existence of its covers that are connected tetravalent half-arc-transitive graphs with vertex stabilizer $\D_8\times\C_2^2$. In fact, the very recent paper~\cite{SX} followed this approach to show the existence of connected tetravalent half-arc-transitive graphs with vertex stabilizer $\D_8\times\C_2^m$ for all $m\geqslant1$, which can be also used to answer Question~\ref{que1}. Before stating our main result, we introduce some terminology.

Let $G$ be a group and let $S$ be an inverse-closed nonempty subset of $G\setminus\{1\}$. The \emph{Cayley graph} $\Cay(G,S)$ on $G$ with \emph{connection set} $S$ is defined to be the graph with vertex set $G$ such that $x,y\in G$ are adjacent if and only if $yx^{-1}\in S$. Denote by $R_G(G)$ the subgroup of $\Sym(G)$ induced by the right multiplication of $G$ on itself, and
\[
\Aut(G,S)=\{\alpha\in\Aut(G)\mid S^\alpha=S\}
\]
the setwise stabilizer of $S$ in $\Aut(G)$. Then $R_G(G)\rtimes\Aut(G,S)$ is a subgroup of $\Aut(\Cay(G,S))$. The Cayley graph $\Cay(G,S)$ is said to be \emph{normal} if $R_G(G)$ is a normal subgroup of $\Aut(\Cay(G,S))$, and is said to be \emph{nonnormal} otherwise.

Let $\Gamma$ be a tetravalent half-arc-transitive graph. It is easy to see that the action of $\Aut(\Gamma)$ on the arc set has two orbits, denoted by $\calO_1(\Gamma)$ and $\calO_2(\Gamma)$. For each $i\in\{1,2\}$ and each edge of $\Gamma$, the orbit $\calO_i(\Gamma)$ contains exactly one of the two arcs corresponding to this edge. Hence $\calO_1(\Gamma)$ and $\calO_2(\Gamma)$ give two opposite orientations of the edges of $\Gamma$, and thus give two digraphs with the same vertex set as $\Gamma$, denoted by $\calD_1(\Gamma)$ and $\calD_2(\Gamma)$. A cycle of $\Gamma$ is called an \emph{alternating cycle} if consecutive edges along the cycle have opposite orientations in $\calD_1(\Gamma)$ (and thus also opposite orientations in $\calD_2(\Gamma)$). In a seminal paper~\cite{Marusic1998ii} in 1998, Maru\v{s}i\v{c} proved that all alternating cycles of the tetravalent half-arc-transitive graph $\Gamma$ have the same length, half of which is called the \emph{radius} of $\Gamma$, and any two alternating cycles of $\Gamma$ with nonempty intersection share the same number of vertices. This number is called the \emph{attachment number} of $\Gamma$, and $\Gamma$ is said to be \emph{loosely attached} if this number is $1$. Analyzing alternating cycles and attachment numbers has been a general approach to study tetravalent half-arc-transitive graphs (see for instance~\cite{Marusic1998ii,MP1999,MW2000,PS2017,Sparl2008}).

Let $\Delta$ be a digraph. The \emph{reverse} of $\Delta$ is the digraph obtained by reversing each arc of $\Delta$. We say that $\Delta$ is \emph{self-reverse} if the reverse of $\Delta$ is isomorphic to $\Delta$. For a positive integer $s$, an \emph{$s$-arc} of $\Delta$ is a tuple $(v_0,v_1,\dots,v_s)$ of vertices such that $(v_i,v_{i+1})$ is an arc of $\Delta$ for each $i\in\{0,1,\dots,s-1\}$. We say that $\Delta$ is \emph{$s$-arc-transitive} if $\Aut(\Delta)$ acts transitively on the set of $s$-arcs of $\Delta$. Note that if $\Delta$ is vertex-transitive and $s$-arc-transitive with $s\geqslant2$ then it is also $(s-1)$-arc-transitive.

Now we state the main result of this paper.

\begin{theorem}\label{thm1}
For every positive integer $m$, there exists a graph $\Gamma_m$ satisfying the following:
\begin{enumerate}[{\rm (a)}]
\item $\Gamma_m\cong\Cay(\A_{2^{m+6}-1},S)$ is a connected nonnormal Cayley graph on the alternating group $\A_{2^{m+6}-1}$ for some $S\subset\A_{2^{m+6}-1}$ such that $\Aut(\A_{2^{m+6}-1},S)=1$;
\item $\Gamma_m$ is a loosely attached tetravalent half-arc-transitive graph of radius $6$;
\item $\Aut(\Gamma_m)\cong\A_{2^{m+6}}$;
\item $\Aut(\Gamma_m)$ has vertex stabilizer isomorphic to $\D_8^2\times\C_2^m$;
\item $\calD_1(\Gamma_m)$ and $\calD_2(\Gamma_m)$ are both $(m+6)$-arc-transitive digraphs that are not self-reverse.
\end{enumerate}
\end{theorem}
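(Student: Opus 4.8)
The plan is to realise $\Gamma_m$ as a coset graph for $G=\A_{2^{m+6}}$ and then to determine its full automorphism group. Write $n=2^{m+6}$ and let $G=\A_n$ act naturally on $n$ points. First I would produce a regular subgroup $H\cong\D_8^2\times\C_2^m$ of $G$: since $|H|=2^{m+6}=n$ and every element of $\D_8^2\times\C_2^m$ has order dividing $4$, the regular representation embeds $H$ into $\A_n$, because a nonidentity element of order $2^k$ (with $1\leqslant k\leqslant2$) acts as $n/2^k=2^{m+6-k}$ cycles of length $2^k$, an even permutation as $m+6-k\geqslant1$. The vertex set $[G:H]$ then has size $(n-1)!/2$, and the point stabiliser $\A_{n-1}\leqslant\A_n$ meets $H$ trivially (as $H$ is regular on the $n$ points), so $\A_{n-1}$ acts regularly on $[G:H]$; this realises $\Gamma_m$ as a Cayley graph on $\A_{n-1}$ as required in (a). The creative heart of the construction is the choice of an element $g\in G$ with $\langle H,g\rangle=G$, with $HgH\neq Hg^{-1}H$, and with $|H:H\cap H^g|=|H:H\cap H^{g^{-1}}|=2$. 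I would then set $\calD_1=\Cos(G,H,HgH)$, let $\calD_2$ be its reverse $\Cos(G,H,Hg^{-1}H)$, and take $\Gamma_m$ to be the underlying undirected graph.

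The elementary properties follow from the defining data. Connectedness is exactly $\langle H,g\rangle=\A_n$, which forces $\Aut(\Gamma_m)\geqslant G$; and $\Gamma_m$ is nonnormal because the regular subgroup $\A_{n-1}$ is not normal in $\A_n$. The valency is $|H:H\cap H^g|+|H:H\cap H^{g^{-1}}|=4$, so $\Gamma_m$ is tetravalent, the two size-$2$ suborbits $HgH/H$ and $Hg^{-1}H/H$ giving the out-neighbourhoods of the $G$-arc-transitive digraphs $\calD_1,\calD_2$. Since $HgH\neq Hg^{-1}H$ one has $g\notin Hg^{-1}H$, so no element of $G$ reverses the fundamental arc $(H,Hg)$; as $G$ is visibly edge-transitive, it follows that $G$ is half-arc-transitive, with $\calO_1(\Gamma_m),\calO_2(\Gamma_m)$ the arc sets of $\calD_1,\calD_2$.

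The central quantitative input is that $\calD_1$ has out-valency $2$ while $G_v=H$ has order $2^{m+6}$. Reading off the stabiliser chain along a directed path, the directed $s$-arcs issuing from a vertex number at most $2^s$; hence $H$ can be transitive on them only for $s\leqslant m+6$, and I would verify by a direct computation with $H$ and $g$ that no nonidentity element of $H$ fixes a directed $(m+6)$-arc pointwise. This shows $\calD_1$ and $\calD_2$ are $(m+6)$-arc-transitive with trivial directed $(m+6)$-arc stabiliser in $G$, giving most of (e). The crux is to promote this to the full group. Working inside the digraph $\calD_1$ --- whose arcs are intrinsically defined --- I would prove the rigidity statement that a directed $(m+6)$-arc of $\calD_1$ is fixed pointwise only by the identity of $\Aut(\calD_1)$; granting this, $\Aut(\calD_1)_v$ acts with trivial stabiliser on the at most $2^{m+6}$ directed $(m+6)$-arcs from $v$, so $|\Aut(\calD_1)_v|\leqslant2^{m+6}=|H|=|G_v|$, forcing $\Aut(\calD_1)=G=\A_n$. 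I would then show $\calD_1$ is not self-reverse by exhibiting a digraph invariant --- most naturally read off from the handedness of the alternating cycles --- distinguishing $\calD_1$ from $\calD_2$; this guarantees that every automorphism of $\Gamma_m$ preserves the orientation, whence $\Aut(\Gamma_m)=\Aut(\calD_1)=\A_n$, which is (c) and (since $G$ is half-arc-transitive) also gives the half-arc-transitivity in (b). The vertex stabiliser is then $H\cong\D_8^2\times\C_2^m$, yielding (d), and $\Aut(\A_{n-1},S)=1$ because such an automorphism would fix the base vertex while normalising the regular subgroup $\A_{n-1}$, whereas in $\A_n$ the point stabiliser $\A_{n-1}$ is self-normalising and meets every vertex stabiliser trivially.

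Finally I would read off the metric data of (b) from the coset description. The common length of the alternating cycles is twice the radius, so I must show it equals $12$ and that any two alternating cycles sharing a vertex meet in a single vertex, i.e.\ the attachment number is $1$. Both are explicit computations inside $\langle H,g\rangle$: the alternating cycle through the fundamental arc is governed by the order of a specific product of $g$ with elements of $H$, and I expect the radius $6$ and loose attachment to fall out of the same relations that make the two suborbits distinct and unfused. The single genuinely difficult step is the rigidity claim underlying $\Aut(\calD_1)=\A_n$: every other item is a finite verification or a formal consequence, whereas showing that no unexpected automorphism can fix a long directed arc will require using the precise combinatorial design of $g$ to reconstruct the natural action of $\A_n$ from local data.
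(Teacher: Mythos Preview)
Your overall framework---realising $\Gamma_m$ as a coset graph $\Cos(\A_n,H,HgH\cup Hg^{-1}H)$ with $H\cong\D_8^2\times\C_2^m$ regular on $n=2^{m+6}$ points, and reading off the Cayley structure on $\A_{n-1}$---matches the paper. The divergence, and the genuine gap, is in how you propose to pin down $\Aut(\Gamma_m)$.

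Your plan hinges on a ``rigidity'' statement: that the pointwise stabiliser in $\Aut(\calD_1)$ of a directed $(m+6)$-arc is trivial. You verify this inside $G=\A_n$, but that only shows $G$ acts regularly on based $(m+6)$-arcs; it says nothing about a hypothetical overgroup $A=\Aut(\calD_1)>G$. A priori $A_v$ could be a $2$-group of order $2^t$ for some $t>m+6$ (the chain of arc-stabilisers along a directed path still drops by factors of at most $2$), and your argument gives no mechanism to rule this out. Likewise, your route to $\Aut(\Gamma_m)=\Aut(\calD_1)$ requires an independent proof that $\calD_1$ is not self-reverse, and ``exhibiting a digraph invariant'' is not a method: the paper in fact derives non-self-reversibility \emph{from} half-arc-transitivity, not the other way round.

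The paper avoids both problems by a completely different route. It first proves $\Aut(\A_{n-1},S)=1$ directly---not as a corollary of knowing $\Aut(\Gamma_m)$, but by an explicit case analysis on how a putative $\sigma\in\Aut(\A_{n-1},S)$ could permute $\{xy,(xy)^{-1},xz,(xz)^{-1}\}$, each case being killed by comparing fixed-point counts of specific words such as $yz$, $(yz)^2$, $xyxz$ (Lemmas~\ref{lem9}, \ref{lem16}, \ref{lem2}). This yields $\Nor_X(G)=G$ for $X=\Aut(\Gamma_m)$ via Godsil's normaliser formula. Then a short argument with Gardiner's bound forces the vertex stabiliser in $X$ to be a $2$-group, and the Fang--Praeger--Wang structure theorem for Cayley graphs on nonabelian simple groups, together with Guralnick's classification of prime-power-index subgroups and a representation-theoretic bound, eliminates every possibility except $X=\A_n$. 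Only \emph{after} this does the paper read off half-arc-transitivity, the $(m+6)$-arc-transitivity of $\calD_i$, and non-self-reversibility. Your deduction of $\Aut(\A_{n-1},S)=1$ from $\Aut(\Gamma_m)=\A_n$ is valid but reverses the paper's logic, and without the independent proof of $\Aut(\A_{n-1},S)=1$ you have no entry point into the structural machinery that actually bounds $\Aut(\Gamma_m)$.

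A secondary point: you treat $\langle H,g\rangle=\A_n$ as a hypothesis to be arranged, but in the paper this is itself a substantial argument (Proposition~\ref{prop1}), using primitivity, the Guralnick--Magaard theorem on fixed-point ratios, and detailed orbit computations for $\langle xy,xz\rangle$ on $H\setminus\{1\}$. It is not a routine verification.
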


The graph $\Gamma_m$ is constructed in Section~\ref{sec2} (Construction~\ref{con1}), with its properties shown in later sections (in particular, a proof of Theorem~\ref{thm1} is given at the end of Section~\ref{sec5}). Unlike the previously known examples of tetravalent half-arc-transitive graphs with nonabelian vertex stabilizers, where the proof of their properties involves more or less computer computation, the proof of Theorem~\ref{thm1} is not computer-assisted in this paper. Nevertheless, the proof of some properties of $\Gamma_m$ is based on tedious (but direct) calculation, and computer computation can be used to verify these properties for small values of $m$. For this purpose, codes in \magma~\cite{BCP1997} are given in the appendix of~\cite{Xia}. Besides answering Question~\ref{que1}, the properties of $\Gamma_m$ have many other significances. This is illustrated in the next section.

\section{More background}

\subsection{Normality of Cayley graphs and graphical regular representations}

As mentioned above, for each Cayley graph $\Gamma=\Cay(G,S)$, the automorphism group $\Aut(\Gamma)$ of $\Gamma$ contains the subgroup $R_G(G)\rtimes\Aut(G,S)$. It was shown by Godsil~\cite{Godsil1981} that this subgroup is actually the normalizer of $R_G(G)$ in $\Aut(\Gamma)$, that is,
\begin{equation}\label{eq35}
\Nor_{\Aut(\Gamma)}(R_G(G))=R_G(G)\rtimes\Aut(G,S).
\end{equation}
A Cayley graph $\Cay(G,S)$ is called a \emph{graphical regular representation} (\emph{GRR} for short) of $G$ if $\Aut(\Cay(G,S))=R_G(G)$. Clearly, a GRR is necessarily a normal Cayley graph, and~\eqref{eq35} shows that a necessary condition for $\Cay(G,S)$ to be a GRR is $\Aut(G,S)=1$. In many circumstances this condition is also sufficient (see for instance~\cite{FLWX2002,Godsil1981,Godsil1983,LS2000,Spiga2018}), which prompts the following problem.

\begin{problem}\label{prob1}
\cite[Problem~A]{FLWX2002} Determine groups $G$ and Cayley graphs $\Cay(G,S)$ for which $\Cay(G,S)$ is a GRR of $G$ if and only if $\Aut(G,S)=1$.
\end{problem}

Partial results on Problem~\ref{prob1} have been achieved in the literature for $p$-groups $G$ (see~\cite{Godsil1981,LS2000,ZF2016}) and for cubic Cayley graphs $\Cay(G,S)$ on nonabelian simple groups $G$ (see~\cite{FLWX2002,Godsil1983}). Note by~\eqref{eq35} that Problem~\ref{prob1} is equivalent to determining groups $G$ and Cayley graphs $\Cay(G,S)$ for which $\Cay(G,S)$ is normal whenever $\Aut(G,S)=1$. We are then led to the problem as follows, which is a counterpart of Problem~\ref{prob1}.

\begin{problem}\label{prob2}
Determine groups $G$ and Cayley graphs $\Cay(G,S)$ for which $\Cay(G,S)$ is nonnormal and $\Aut(G,S)=1$.
\end{problem}

Compared with Problem~\ref{prob1}, cases in the solution of Problem~\ref{prob2} are expected to be more rare, and the Cayley graphs $\Gamma_m$ on alternating groups $\A_{2^{m+6}-1}$ as in Theorem~\ref{thm1} give examples of such cases.

\subsection{Nonnormal Cayley graphs on nonabelian simple groups}

In 2002, Fang, Praeger and Wang published a paper~\cite{FPW2002} describing the automorphism groups of possible nonnormal Cayley graphs on nonabelian simple groups. Since then, nonnormal Cayley graphs on nonabelian simple groups have attracted considerable attention, especially for those of small valency. In~\cite{XFWX2005,XFWX2007} it was proved that the only connected arc-transitive cubic nonnormal Cayley graphs on nonabelian simple groups are two Cayley graphs on $\A_{47}$ up to isomorphism, and their full automorphism groups are both isomorphic to $\A_{48}$. For connected cubic nonnormal Cayley graphs $\Cay(G,S)$ that are not arc-transitive, a list of candidates for the nonabelian simple group $G$ was obtained in~\cite{FLWX2002,ZF2010}. However, not many examples are known for connected cubic nonnormal Cayley graphs on nonabelian simple groups. In a survey paper~\cite{FLX2008} on normality of Cayley graphs, Feng, Lu and Xu asked the following question.

\begin{question}\label{que2}
\cite[Problem~5.9]{FLX2008} Are there infinitely many connected nonnormal Cayley graphs of valency $3$ or $4$ on nonabelian simple groups?
\end{question}

In 2012, Wang and Feng~\cite{WF2012} gave a positive answer to Question~\ref{que2} in the case of valency $4$ by constructing infinite families of connected tetravalent nonnormal Cayley graphs on alternating groups. Their examples are half-edge-transitive, where a \emph{half-edge-transitive} graph is by definition a graph whose automorphism group has exactly two orbits on the edge set. They also pointed out that many tetravalent nonnormal Cayley graphs turn out to be half-edge-transitive; for example, all the tetravalent nonnormal Cayley graphs on $\A_5$ and $\A_6$ are half-edge-transitive (see~\cite[Theorem~3.1]{XX2004} and~\cite[Theorem~5.1]{WF2012}).

Recently, the authors in~\cite{CXZ2018} constructed an infinite family of connected cubic nonnormal Cayley graphs on alternating groups, which completely answers Question~\ref{que2} in the affirmative. The construction in~\cite{CXZ2018} together with~\cite{Spiga2016} inspires the construction in the present paper (see Section~\ref{sec2}). This results in the graphs $\Gamma_m$ as in Theorem~\ref{thm1}, which form an infinite family of connected tetravalent nonnormal Cayley graphs on nonabelian simple groups that are edge-transitive.

\subsection{Tetravalent edge-transitive Cayley graphs on nonabelian simple groups}

As mentioned in the previous subsection, it is believed to be rare for a connected tetravalent nonnormal Cayley graph on a nonabelian simple group to be edge-transitive. In fact, the possibilities for such a nonabelian simple group are very restricted, as shown in~\cite[Theorem~1.1(2)]{FLX2004}: There are eight infinite families listed below and some sporadic cases in~\cite[Table~1]{FLX2004}.
\begin{enumerate}[(i)]
\item $\A_{2^n-1}$ for integer $n\geqslant3$;
\item $\PSL_n(2^e)$ for integers $n\geqslant4$ and $e\geqslant1$;
\item $\PSU_n(2^e)$ for integers $n\geqslant4$ and $e\geqslant1$;
\item $\PSp_{2n}(2^e)$ for integers $n\geqslant3$ and $e\geqslant1$;
\item $\E_6(2^e)$ for integer $e\geqslant1$;
\item $\E_7(2^e)$ for integer $e\geqslant1$;
\item $\,^2\E_6(2^e)$ for integer $e\geqslant1$;
\item $\,^2\G_2(2^e)$ for integer $e\geqslant1$.
\end{enumerate}
Nevertheless, it has been unknown whether the groups listed in~\cite[Theorem~1.1(2)]{FLX2004} indeed give examples of connected tetravalent edge-transitive nonnormal Cayley graphs. Thus we pose the following problem.

\begin{problem}\label{prob3}
Determine the groups $G$ in~\cite[Theorem~1.1(2)]{FLX2004} for which there exists a connected tetravalent edge-transitive nonnormal Cayley graph on $G$.
\end{problem}

Our Theorem~\ref{thm1} gives a solution of Problem~\ref{prob3} for the groups~(i) with $n\geqslant7$.

\subsection{Normal quotient analysis for tetravalent half-arc-transitive graphs}

Let $\Gamma$ be a tetravalent graph admitting a half-arc-transitive group $G$. For a normal subgroup $N$ of $G$, the \emph{normal quotient graph} $\Gamma_N$ is defined as follows: The vertex set of $\Gamma_N$ is the set of $N$-orbits on the vertex set of $\Gamma$, and a pair $\{B,C\}$ of distinct $N$-orbits forms an edge of $\Gamma_N$ if and only if there exists an edge $\{u,v\}$ of $\Gamma$ with $u\in B$ and $v\in C$. The normal quotient graph has been utilized to investigate various problems on graph symmetries (for instance the systematic study of $s$-arc-transitive graphs initiated by Praeger~\cite{Praeger1993}), and was recently proposed by Al-bar, Al-kenani, Muthana, Praeger and Spiga~\cite{AAMPS2016} as a new framework for the study of tetravalent graphs admitting a half-arc-transitive group.

Let $\mathcal{OG}(4)$ denote the family of pairs $(\Gamma,G)$ for which $\Gamma$ is a connected tetravalent graph admitting a half-arc-transitive group $G$. A pair $(\Gamma,G)$ in $\mathcal{OG}(4)$ is said to be \emph{basic} if $\Gamma_N$ has valency at most $2$ for each nontrivial normal subgroup $N$ of $G$. The framework proposed in~\cite{AAMPS2016} is to develop first a theory to describe the basic pairs in $\mathcal{OG}(4)$, and then a theory to describe the pairs $(\Gamma,G)\in\mathcal{OG}(4)$ for a basic pair $(\Gamma_N,\overline{G})\in\mathcal{OG}(4)$, where $N$ is a normal subgroup of $G$ and $\overline{G}$ is the permutation group induced by $G$ on the vertex set of $\Gamma_N$; see~\cite{AAMP2017i,AAMP2017ii} for progress under this framework. A permutation group is said to be \emph{quasiprimitive} if each of its nontrivial normal subgroups is transitive. As proposed in~\cite{AAMPS2016}, the first step to describe the basic pairs $(\Gamma,G)\in\mathcal{OG}(4)$ is:

\begin{problem}\label{prob4}
Describe the pairs $(\Gamma,G)\in\mathcal{OG}(4)$ with $G$ quasiprimitive on the vertex set of $\Gamma$.
\end{problem}

Since Theorem~\ref{thm1} implies that $\Aut(\Gamma_m)$ is simple and thus quasiprimitive on the vertex set of $\Gamma_m$, the graphs $\Gamma_m$ give rise to pairs $(\Gamma_m,\Aut(\Gamma_m))$ as examples in Problem~\ref{prob4}.

\subsection{$s$-Arc-transitive digraphs that are not self-reverse}

In general, a digraph is not necessarily self-reverse. However, constructing highly symmetric digraphs that are not self-reverse is nontrivial. In~\cite{Delorme2013} Delorme asked the following question.

\begin{question}\label{que3}
Do finite digraphs exist that are simultaneously vertex-transitive, $2$-arc-transitive and not self-reverse?
\end{question}

The answer to Question~\ref{que3} is affirmative, as shown by Conder, Poto\v{c}nik and \v{S}parl~\cite{CPS2015}. Then it is natural to ask the question in a stronger version by replacing $2$-arc-transitivity with $s$-arc-transitivity for larger values of $s$. In this fashion, our Theorem~\ref{thm1}(e) shows that the answer is always affirmative no matter how large $s$ is.

The digraphs $\calD_1(\Gamma_m)$ and $\calD_2(\Gamma_m)$ both admit $\Aut(\Gamma_m)\cong\A_{2^{m+6}}$ as a group of automorphisms, which is quasiprimitive on the vertex set. A systematic study of $s$-arc-transitive digraphs admitting a quasiprimitive group of automorphisms was initiated in~\cite{GX2018}, and examples of such digraphs for arbitrary large $s$ were first constructed in~\cite{CLP1995}.

\section{Construction of $\Gamma_m$}\label{sec2}

In this paper, as is the usual convention, the product over an empty set is $1$. For a permutation $\sigma$ of a set $\Omega$, denote the set of fixed points of $\sigma$ by $\Fix(\sigma)$.

Now we fix some notation that will be used throughout this paper. The key notation to be introduced in this section is listed in the table below.

\vskip0.1in
\begin{tabular}{ll}
\hline
$m$: & a positive integer\\
$E,e_1,\dots,e_m$: & $E=\langle e_1\rangle\times\dots\times\langle e_m\rangle\cong\C_2^m$\\
$H,a,b,c,d$: & $H=\langle a,b\mid a^4=b^2=(ab)^2=1\rangle\times\langle c,d\mid c^4=d^2=(cd)^2=1\rangle\times E$\\
$f$: & $ab\prod_{i=1}^{\lceil m/2\rceil}e_{2i-1}$\\
$K$: & $\langle a^2,b,c,d,E\rangle$\\
$R$: & the action of $H$ by right multiplication on itself\\
$x$: & the automorphism of $H$ defined by~\eqref{eq40}\\
$y$: & the permutation of $H$ defined by~\eqref{eq41}\\
$z$: & $R(f)yR(fcde_m^m)$\\
$\rho$: & the action of $\Alt(H)$ by right multiplication on $[\Alt(H){:}R(H)]$\\
$\Gamma_m$: & $\Cos(\Alt(H),R(H),R(H)\{xy,(xy)^{-1}\}R(H))$\\
\hline
\end{tabular}
\vskip0.1in

Let $m\geqslant1$ be an integer and let
\[
H=\langle a,b\mid a^4=b^2=(ab)^2=1\rangle\times\langle c,d\mid c^4=d^2=(cd)^2=1\rangle\times E,
\]
where $E=\langle e_1\rangle\times\dots\times\langle e_m\rangle\cong\C_2^m$. Clearly, $H\cong\D_8^2\times\C_2^m$, so that $|H|=2^{m+6}$. Let
\[
f=ab\prod_{i=1}^{\lceil m/2\rceil}e_{2i-1}\quad\text{and}\quad K=\langle a^2,b,c,d,E\rangle.
\]
Then $H=K\sqcup aK=K\sqcup fK$, where $\sqcup$ denotes the union of two disjoint sets. For convenience, put $e_i=1$ for $i\leqslant0$. In what follows we will define some permutations of $H$. Before giving the definition, we remind the reader that for $h\in H$ and $\sigma\in\Sym(H)$ the notation $h^\sigma$ exclusively means the image of $h$ under $\sigma$ in this paper.

Define $x\in\Aut(H)$ by letting
\begin{equation}\label{eq40}
a^x=c,\quad b^x=d,\quad c^x=a^3,\quad d^x=ab,\quad e_{2i-1}^x=e_{2i-1},\quad e_{2i}^x=a^2e_{2i-1}e_{2i}
\end{equation}
for $i=1,\dots,\lfloor m/2\rfloor$ and letting $e_m^x=a^2e_m$ in addition if $m$ is odd. Note that $x$ is indeed an automorphism of $H$ as the images of the generators of $H$ under $x$ are generators satisfying the defining relations of $H$. Define $\varphi\in\Aut(K)$ by letting
\[
(a^2)^\varphi=b,\ b^\varphi=a^2,\ c^\varphi=c,\ d^\varphi=d,\ e_{2i-1}^\varphi=e_{2i-3}e_{2i-2}e_{2i},\ e_{2i}^\varphi=e_{2i-3}e_{2i-2}e_{2i-1}
\]
for $i=1,\dots,\lfloor m/2\rfloor$ and letting $e_m^\varphi=e_m$ in addition if $m$ is odd. Note also that $\varphi$ is indeed an automorphism of $K$ as the images of the generators of $K$ under $\varphi$ are generators satisfying the defining relations of $K$. Let $y$ be the permutation of $H$ such that
\begin{equation}\label{eq41}
k^y=k^\varphi\quad\text{and}\quad(fk)^y=fcdk^\varphi e_m^m
\end{equation}
for each $k\in K$.

\begin{lemma}\label{lem6}
The permutations $x$ and $y$ have order $|x|=4$ and $|y|=2$.
\end{lemma}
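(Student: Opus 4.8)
The plan is to establish the two order claims separately, since $x$ and $y$ have quite different definitions. For $x$, which is declared to be an automorphism of $H$, the computation of $|x|$ reduces to tracking the images of the generators $a,b,c,d,e_1,\dots,e_m$ under iterated application of~\eqref{eq40}. First I would compute $x^2$ on the generators: from $a^x=c$ and $c^x=a^3$ we get $a^{x^2}=a^3=a^{-1}$, and $c^{x^2}=(a^3)^x=c^3=c^{-1}$; similarly $b^{x^2}=d^x=ab$ and $d^{x^2}=(ab)^x=a^xb^x=cd$. Then I would apply $x$ twice more to verify $a^{x^4}=a$, etc. The key point is that $x^2$ is nontrivial (for instance $a^{x^2}=a^{-1}\neq a$) while $x^4$ fixes every generator, giving $|x|=4$; I would also check the action on the $e_i$ carefully, since $e_{2i}^x=a^2e_{2i-1}e_{2i}$ mixes in an $a^2$ term and one must confirm these contributions cancel after four iterations.

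For $y$, the strategy is different because $y$ is merely a permutation of $H$ defined piecewise on the two cosets $K$ and $fK$ via~\eqref{eq41}. I would compute $y^2$ by applying the defining formulas twice. On $K$, we have $k^{y^2}=(k^\varphi)^\varphi=k^{\varphi^2}$, so I first need $|\varphi|=2$ as an automorphism of $K$; this follows by the same generator-tracking as for $x$, checking $(a^2)^{\varphi^2}=b^\varphi=a^2$, $b^{\varphi^2}=a^2{}^\varphi=b$, and that the $e_i$-images return to themselves. On the coset $fK$, I would apply~\eqref{eq41} to $(fk)^y=fcdk^\varphi e_m^m$, treating $cdk^\varphi e_m^m$ as an element of $K$ (note $f\in fK$ but $cd\,k^\varphi e_m^m\in K$), so that $(fk)^{y^2}=fcd(cdk^\varphi e_m^m)^\varphi e_m^m$. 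The goal is to show this simplifies to $fk$, which requires $\varphi$ to interact correctly with $c,d,e_m$.

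The main obstacle will be the bookkeeping on the coset $fK$, specifically verifying that $fcd(cd\,k^\varphi e_m^m)^\varphi e_m^m=fk$ for all $k\in K$. I expect to use that $c^\varphi=c$, $d^\varphi=d$, and $e_m^\varphi=e_m$ (the last holding when $m$ is odd; when $m$ is even one has $e_m^m=1$ since $m$ is even, so the $e_m^m$ terms drop out entirely, and I should handle the parity of $m$ as two cases). Expanding, $(cd\,k^\varphi e_m^m)^\varphi=c^\varphi d^\varphi k^{\varphi^2}(e_m^m)^\varphi=cd\,k\,e_m^m$, so the expression becomes $fcd\cdot cd\,k\,e_m^m\cdot e_m^m=f(cd)^2k\,e_m^{2m}$, and since $(cd)^2=1$ from the defining relation and $e_m^{2m}=1$, this indeed collapses to $fk$. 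This confirms $y^2=1$; together with $y\neq 1$ (evident since, e.g., $a^{2y}=b\neq a^2$), this yields $|y|=2$.
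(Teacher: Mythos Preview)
Your proposal is correct and follows essentially the same route as the paper: verify $x^4=1$ and $x^2\neq1$ on the generators of $H$, show $\varphi^2=1$ on the generators of $K$, and then compute $y^2$ on $K$ and $fK$ separately via~\eqref{eq41}, concluding with $y\neq1$ from $(a^2)^y=b$. The only difference is cosmetic: you make the parity-of-$m$ case split for $(e_m^m)^\varphi$ explicit, whereas the paper absorbs it silently in the line $(cdk^\varphi e_m^m)^\varphi=cdk^{\varphi^2}e_m^m$.
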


\begin{proof}
It is direct to verify that $x^4$ fixes each of the generators $a,b,c,d,e_1,\dots,e_m$ of $H$, which implies $x^4=1$. Then since $a^{x^2}=(a^x)^x=c^x=a^3\neq a$, we know that $x^2\neq1$ and so $|x|=4$.

Since $\varphi^2$ fixes each of the generators $a^2,b,c,d,e_1,\dots,e_m$ of $K$ and $(a^2)^y=b\neq a^2$, it follows that $\varphi^2=1$ and $y\neq1$. Moreover, for each $k\in K$, as $k^\varphi\in K$, we have
\[
k^{y^2}=(k^\varphi)^y=(k^\varphi)^\varphi=k^{\varphi^2}=k
\]
and
\[
(fk)^{y^2}=(fcdk^\varphi e_m^m)^y=fcd(cdk^\varphi e_m^m)^\varphi e_m^m=fcd(cdk^{\varphi^2}e_m^m)e_m^m=fk^{\varphi^2}=fk.
\]
This shows that $y^2=1$, which together with $y\neq1$ implies $|y|=2$.
\end{proof}

Let $R$ be the action of $H$ by right multiplication on itself. Then $R(H)$ is a regular subgroup of $\Sym(H)$.

\begin{lemma}\label{lem3}
The permutations $x$ and $y$ and the group $R(H)$ are all in $\Alt(H)$.
\end{lemma}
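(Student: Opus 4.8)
The plan is to compute the sign of each of $x$, $y$ and the generators of $R(H)$ and check that it equals $+1$. Since $|H|=2^{m+6}$, every permutation in sight has order a power of $2$, and I will use the elementary fact that such a permutation $\sigma$ of a finite set $\Omega$ has sign $(-1)^{N}$, where $N$ is the number of its cycles of length at least $2$ (each such cycle has even length and is therefore an odd permutation). The counts $N$ will be read off from fixed-point sets: an involution has $(|\Omega|-|\Fix(\sigma)|)/2$ nontrivial cycles, while a permutation of order $4$ has $(|\Fix(\sigma^2)|-|\Fix(\sigma)|)/2$ cycles of length $2$ and $(|\Omega|-|\Fix(\sigma^2)|)/4$ cycles of length $4$; by Lemma~\ref{lem6} these apply to $y$ and $x$ respectively.

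First I would dispose of $R(H)$. For $g\in H$ of order $2^j$ with $j\geqslant1$, the permutation $R(g)$ is a product of $|H|/2^j=2^{m+6-j}$ cycles, all of length $2^j$. Each generator $a,b,c,d,e_1,\dots,e_m$ has order $2$ or $4$, so here $j\leqslant2$, whence $2^{m+6-j}$ is even and $R(g)$ is a product of an even number of odd cycles, hence even. Therefore $R(H)\leqslant\Alt(H)$.

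Next I would locate $\Fix(y)$. On $K$ the permutation $y$ coincides with $\varphi\in\Aut(K)$, so $\Fix(y)\cap K=\Fix(\varphi)$, a subgroup of $K$ containing $\langle c,d\rangle$ (as $\varphi$ fixes $c$ and $d$) and hence of order divisible by $8$. The crux is that $y$ has no fixed point on $fK$: a fixed point $fk$ would satisfy $fcd\,k^\varphi e_m^m=fk$, i.e.\ $cd\,k^\varphi e_m^m=k$, and comparing the $\langle c,d\rangle$-components of the two sides---using that $\varphi$ fixes $\langle c,d\rangle$ pointwise and that $\langle a^2,b\rangle$ and $E$ centralize $\langle c,d\rangle$---forces $cd=1$, a contradiction. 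Thus $\Fix(y)=\Fix(\varphi)$ has order divisible by $4$; since $|H|\equiv0\pmod 4$, the transposition count $(|H|-|\Fix(y)|)/2$ is even and $y\in\Alt(H)$.

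Finally, for $x$ I would show $|\Fix(x)|\equiv0\pmod 4$ and $|\Fix(x^2)|\equiv0\pmod 8$; as both are fixed subgroups of the $2$-group $H$, their orders are powers of $2$, so it suffices to exhibit fixed subgroups of orders $4$ and $8$. One checks $a^2c^2\in\Fix(x)$ and $a^2,c^2\in\Fix(x^2)$. For $m\geqslant2$ the element $e_1$ is fixed by both $x$ and $x^2$, giving $\langle a^2c^2,e_1\rangle\leqslant\Fix(x)$ of order $4$ and $\langle a^2,c^2,e_1\rangle\leqslant\Fix(x^2)$ of order $8$; for $m=1$ one instead uses that $ace_1$ (of order $4$) is fixed by both $x$ and $x^2$, which together with $a^2,c^2\in\Fix(x^2)$ yields the same bounds $|\Fix(x)|\geqslant4$ and $|\Fix(x^2)|\geqslant8$. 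Since $|H|=2^{m+6}\equiv0\pmod 8$, both $(|\Fix(x^2)|-|\Fix(x)|)/2$ and $(|H|-|\Fix(x^2)|)/4$ are then even, so $x$ has an even number of nontrivial cycles and lies in $\Alt(H)$. The genuinely non-formal steps---and the places I would take most care---are the proof that $y$ is fixed-point-free on $fK$ and this uniform control of $\Fix(x)$ and $\Fix(x^2)$, the case $m=1$ behaving differently precisely because $e_1$ is then not fixed by $x$.
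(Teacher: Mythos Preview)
Your proof is correct. For $x$ and for $R(H)$ it is essentially identical to the paper's argument: the same fixed subgroups $\langle ace_1\rangle$, $\langle a^2c^2,e_1\rangle$, $\langle a^2,ace_1\rangle$, $\langle a^2,c^2,e_1\rangle$ are used to bound $|\Fix(x)|$ and $|\Fix(x^2)|$, and the parity of the cycle count is deduced the same way (the paper phrases $R(H)\leqslant\Alt(H)$ as the general fact for non-cyclic $2$-groups, but that is your computation in disguise).

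For $y$ you take a genuinely different route. You compute $\Fix(y)$ directly: on $K$ it is $\Fix(\varphi)\supseteq\langle c,d\rangle$, and on $fK$ it is empty because $\varphi$ preserves the $\langle c,d\rangle$-component of every $k\in K$, so $cd\,k^\varphi e_m^m=k$ would force $cd=1$. The paper instead avoids identifying $\Fix(y)$ altogether: it introduces the auxiliary involution $\tau$ defined by $k^\tau=k$ and $(fk)^\tau=fcdk\,e_m^m$, observes that $\tau$ is a product of $|K|/2$ transpositions and hence even, and then checks that the bijection $\sigma\colon k\mapsto fk$ conjugates $(y\tau)|_K$ to $(y\tau)|_{fK}$, so $y\tau$ has the same cycle structure on each half and is automatically even. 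Your approach is more hands-on and yields strictly more information (the exact fixed-point set); the paper's trick is slicker in that it never needs to decompose $K$ as $\langle a^2,b\rangle\times\langle c,d\rangle\times E$ or track components.
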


\begin{proof}
According to Lemma~\ref{lem6}, $x$ has order $4$. For $i\in\{1,2,4\}$ let $n_i$ be the number of $i$-cycles in the cycle decomposition of $x$. Then $n_1+2n_2+4n_4=|H|=2^{m+6}$, $|\Fix(x)|=n_1$, $|\Fix(x^2)|=n_1+2n_2$, and $x\in\Alt(H)$ if and only if $n_2+n_4$ is even. Since $x\in\Aut(H)$, both $\Fix(x)$ and $\Fix(x^2)$ are subgroups of $H$. Note that
\[
\Fix(x)\geqslant
\begin{cases}
\langle ace_1\rangle\cong\C_4\quad&\text{if $m=1$}\\
\langle a^2c^2,e_1\rangle\cong\C_2^2\quad&\text{if $m\geqslant2$}
\end{cases}
\]
and
\[
\Fix(x^2)\geqslant
\begin{cases}
\langle a^2,ace_1\rangle\cong\C_2\times\C_4\quad&\text{if $m=1$}\\
\langle a^2,c^2,e_1\rangle\cong\C_2^3\quad&\text{if $m\geqslant2$.}
\end{cases}
\]
We conclude that $|\Fix(x)|$ and $|\Fix(x^2)|$ are divisible by $4$ and $8$, respectively. Hence
\[
n_2+n_4=\frac{n_1+2n_2+4n_4}{4}-\frac{n_1}{2}+\frac{n_1+2n_2}{4}=2^{m+4}-\frac{|\Fix(x)|}{2}+\frac{|\Fix(x^2)|}{4}
\]
is even, and so $x\in\Alt(H)$.

Let $\sigma$ be the bijection from $K$ to $fK$ sending $k$ to $fk$ for each $k\in K$, and $\tau$ be the permutation of $H$ such that $k^\tau=k$ and $(fk)^\tau=fcdke_m^m$ for each $k\in K$. Then
\[
(fk)^\tau\neq fk\quad\text{and}\quad(fk)^{\tau^2}=(fcdke_m^m)^\tau=fcd(cdke_m^m)e_m^m=fk
\]
for each $k\in K$. This implies that $\tau$ is a product of $|K|/2$ transpositions, and so $\tau\in\Alt(H)$. From the definition of $y$ we see that the following diagram commutes:
\[
\xymatrix{
K\ar[r]^{y\tau}\ar[d]^{\sigma}&K\ar[d]^{\sigma}\\
fK\ar[r]^{y\tau}&fK
}
\]
Hence $(y\tau)|_{fK}$ has the same cycle decomposition as $(y\tau)|_K$. As a consequence, $y\tau\in\Alt(H)$, which in conjunction with $\tau\in\Alt(H)$ yields $y\in\Alt(H)$.

Finally, since $H$ is a non-cyclic $2$-group, we derive that $R(H)\leqslant\Alt(H)$. This completes the proof.
\end{proof}

Recall the standard construction of the \emph{coset graph} $\Cos(A,B,S)$ for a finite group $A$ with a subgroup $B$ and an inverse-closed  subset $S$ such that $B\cap S=\emptyset$ and $S$ is a union of double cosets of $B$ in $A$. Such a graph has vertex set $[A{:}B]$, the set of right cosets of $B$ in $A$, and edge set $\{\{Bt,Bst\}\mid t\in A,\ s\in S\}$ (the condition $B\cap S=\emptyset$ ensures that there is no self-loop). It is easy to see that $A$ acts by right multiplication on $[A{:}B]$ as a group of automorphisms of $\Cos(A,B,S)$, and  $\Cos(A,B,S)$ has valency $|S|/|B|$. Moreover, $\Cos(A,B,S)$ is connected if and only if $A=\langle B,S\rangle$. Now we give the construction of $\Gamma_m$. Note from Lemma~\ref{lem3} that $R(H)$ and $xy$ are in $\Alt(H)$.

\begin{construction}\label{con1}
For each integer $m\geqslant1$, let
\[
\Gamma_m=\Cos(\Alt(H),R(H),R(H)\{xy,(xy)^{-1}\}R(H)).
\]
\end{construction}

As in the general construction of coset graphs, the right multiplication gives a group homomorphism $\rho$ from $\Alt(H)$ to $\Aut(\Gamma_m)$. In fact, $\rho$ is an embedding since $R(H)$ is core-free in $\Alt(H)$.

Throughout the paper, the permutation
\[
z:=R(f)yR(fcde_m^m)
\]
will play an important role. Observe that
\[
1^z=1^{R(f)yR(fcde_m^m)}=f^{yR(fcde_m^m)}=(fcde_m^m)^{R(fcde_m^m)}=(fcde_m^m)(fcde_m^m)=1.
\]
Let $\Alt(H)_1$ be the subgroup of $\Alt(H)$ stabilizing $1\in H$. Then $\Alt(H)_1\cong\A_{2^{m+6}-1}$, and $x,y,z\in\Alt(H)_1$ as they all fix $1$. We close this section with lemmas on some elements of $\langle x,y,z\rangle$, which will be used repeatedly (and sometimes implicitly) in subsequent sections. The lemmas are stated below without proof, as it is tedious calculation and can be found in~\cite{Xia}.

\begin{lemma}
The automorphism $x^{-1}$ of $H$ satisfies
\[
a^{x^{-1}}=c^3,\quad b^{x^{-1}}=cd,\quad c^{x^{-1}}=a,\quad d^{x^{-1}}=b,\quad e_{2i-1}^{x^{-1}}=e_{2i-1},\quad e_{2i}^{x^{-1}}=c^2e_{2i-1}e_{2i}
\]
for $i=1,\dots,\lfloor m/2\rfloor$ and $(e_m^m)^{x^{-1}}=c^{2m}e_m^m$.
\end{lemma}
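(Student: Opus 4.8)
The plan is to exploit the fact that $x\in\Aut(H)$ (as fixed by the definition~\eqref{eq40}), so that $x^{-1}$ is a well-defined automorphism determined by its effect on the generators $a,b,c,d,e_1,\dots,e_m$. Rather than computing $x^{-1}$ via the identity $x^{-1}=x^3$ afforded by Lemma~\ref{lem6}, I would verify each displayed identity $g^{x^{-1}}=w$ by checking instead that $w^x=g$: since $x$ is a bijection, $w^x=g$ yields $w=(w^x)^{x^{-1}}=g^{x^{-1}}$. Thus every case reduces to a single application of~\eqref{eq40} followed by reduction modulo the relations $a^4=b^2=c^4=d^2=e_i^2=1$, using freely that the three direct factors of $H$ commute elementwise.

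For the four generators $a,b,c,d$ this is immediate. I would compute $(c^3)^x=(a^3)^3=a^9=a$ and $(cd)^x=a^3(ab)=a^4b=b$, which give $a^{x^{-1}}=c^3$ and $b^{x^{-1}}=cd$; and $c^{x^{-1}}=a$, $d^{x^{-1}}=b$ are read off directly from $a^x=c$ and $b^x=d$. For the odd-indexed $e_{2i-1}$ with $i\leqslant\lfloor m/2\rfloor$ the identity $e_{2i-1}^{x^{-1}}=e_{2i-1}$ is likewise immediate from $e_{2i-1}^x=e_{2i-1}$.

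The only genuinely computational cases are $e_{2i}$ and $e_m^m$, where the $a^2$-contributions in~\eqref{eq40} must be tracked. For $e_{2i}$ I would expand
\[
(c^2e_{2i-1}e_{2i})^x=(a^3)^2\,e_{2i-1}\,(a^2e_{2i-1}e_{2i})=a^8\,e_{2i-1}^2\,e_{2i}=e_{2i},
\]
collecting the $a$-powers by commutativity and then using $a^8=e_{2i-1}^2=1$; this establishes $e_{2i}^{x^{-1}}=c^2e_{2i-1}e_{2i}$. For the last identity the cleanest route is to split on the parity of $m$. When $m$ is even, $e_m^m=1$ and $c^{2m}=1$ (as $4\mid 2m$), so both sides are trivial. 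When $m$ is odd, $e_m^m=e_m$ and $c^{2m}=c^2$, and $e_m^x=a^2e_m$ gives $(c^2e_m)^x=(a^3)^2a^2e_m=a^8e_m=e_m$, whence $(e_m^m)^{x^{-1}}=c^2e_m=c^{2m}e_m^m$.

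I expect no conceptual obstacle here; the difficulty is purely bookkeeping, namely keeping the exponents of $a$, $c$ and the $e_i$ reduced modulo their orders and not losing track of the parity distinction in the final identity. Because each verification is a direct check that $w^x=g$ for the proposed $w$, no separate argument for generation of $H$ or for well-definedness is required beyond the already-established fact that $x\in\Aut(H)$.
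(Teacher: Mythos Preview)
Your proof is correct: verifying each claimed identity $g^{x^{-1}}=w$ by checking $w^x=g$ is valid since $x\in\Aut(H)$ is a bijection, and your computations in each case are accurate. The paper itself omits the proof as ``tedious calculation'' deferred to the arXiv version, so there is no detailed argument to compare against; your direct substitution-and-reduction approach is precisely the routine verification the paper alludes to, and arguably cleaner than the alternative route via $x^{-1}=x^3$ that you mention.
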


\begin{lemma}\label{lem23}
For each $k\in K$, $(ak)^y=a^3bcdk^ye_{2\lfloor m/2\rfloor-1}e_{2\lfloor m/2\rfloor}e_m^m$.
\end{lemma}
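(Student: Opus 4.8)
The plan is to reduce the asserted identity to a single computation inside the central elementary abelian factor $E$, and then to settle that computation by a parity bookkeeping argument. Throughout I write $p=\prod_{i=1}^{\lceil m/2\rceil}e_{2i-1}$, which is exactly the product of all $e_j$ with $j$ odd and $1\leqslant j\leqslant m$, so that $f=ab\,p$. Since $(ab)^2=1$ and the $e_i$ lie in the central factor $E$, the element $f$ is an involution; hence from $aK=fK$ one may rewrite $ak=f(fak)$ with $fak\in K$, which is what lets us invoke the definition of $y$ on $fK$.

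First I would identify the coset representative explicitly. Using the dihedral relation $aba=b$ (a consequence of $a^4=b^2=(ab)^2=1$) together with the centrality of $E$, one computes $fa=aba\,p=bp$, so that $ak=f\cdot(bpk)$ with $bpk\in K$. Applying the definition~\eqref{eq41} of $y$ on $fK$ then gives $(ak)^y=fcd\,(bpk)^\varphi e_m^m$; and since $\varphi\in\Aut(K)$ with $b^\varphi=a^2$, this equals $fcd\,a^2\,p^\varphi k^\varphi e_m^m$. Next I would simplify the constant prefix: substituting $f=ab\,p$, sliding the central factor $p$ to the right, commuting $c,d$ (which lie in the second $\D_8$) past $a^2$, and using $ab\cdot a^2=a^3b$, one obtains $fcd\,a^2=a^3b\,cd\,p$. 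Because $k^\varphi=k^y$ and every $e$-term is central, this rearranges to
\[
(ak)^y=a^3bcd\,k^y\,(p\,p^\varphi)\,e_m^m,
\]
so the lemma becomes equivalent to the single identity $p\,p^\varphi=e_{2\lfloor m/2\rfloor-1}e_{2\lfloor m/2\rfloor}$ in $E$.

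The main obstacle is precisely this last identity, where all the genuine content resides. I would prove it by expanding $p^\varphi=\prod_i e_{2i-1}^\varphi$ via $e_{2i-1}^\varphi=e_{2i-3}e_{2i-2}e_{2i}$ and tracking the exponent of each $e_j$ modulo $2$, separating the contributions at odd and at even indices. In the resulting telescoping products the even-indexed terms cancel in pairs down to $e_{2\lfloor m/2\rfloor}$, while the odd-indexed terms recombine with the original $p$ to leave only $e_{2\lfloor m/2\rfloor-1}$. The two parities of $m$ must be handled separately: when $m$ is odd, the definition of $\varphi$ carries the extra clause $e_m^\varphi=e_m$ and one has $\lceil m/2\rceil\neq\lfloor m/2\rfloor$, so the boundary term $e_m$ contributes an extra factor that has to be reconciled with the general pattern. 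Verifying the base cases $m=1$ and $m=2$ both confirms the formula and pins down the edge conventions (recall $e_i=1$ for $i\leqslant0$), and it is the honest exponent-tracking in this step — rather than the dihedral manipulations, which are routine — that constitutes the tedious part of the calculation.
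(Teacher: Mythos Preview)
Your argument is correct and is precisely the direct computation the paper has in mind: the published version does not spell out the proof but defers it as ``tedious calculation'' to the extended arXiv appendix, and your reduction via $ak=f(bpk)$ followed by the telescoping identity $p\,p^\varphi=e_{2\lfloor m/2\rfloor-1}e_{2\lfloor m/2\rfloor}$ is exactly that calculation carried out cleanly. Your parity bookkeeping for $p\,p^\varphi$ checks out in both cases ($m$ even and $m$ odd), including the boundary conventions $e_i=1$ for $i\leqslant0$ and the extra clause $e_m^\varphi=e_m$ when $m$ is odd.
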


\begin{lemma}\label{lem12}
The permutation $z$ is an involution in $\Alt(H)$ such that for each $g\in\langle c,d,E\rangle$,
\begin{align}\label{eq2}
g^z=cdg^ycd,\quad&(bg)^z=bcdg^ycd,\\
(a^2g)^z=a^2bcdg^ycd,\quad&(a^2bg)^z=a^2cdg^ycd,\\
(fg)^z=fg^ycde_m^m,\quad&(fbg)^z=fbg^ycde_m^m,\\
(fa^2g)^z=fa^2bg^ycde_m^m,\quad&(fa^2bg)^z=fa^2g^ycde_m^m.\label{eq3}
\end{align}
\end{lemma}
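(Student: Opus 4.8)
The plan is to compute $z$ explicitly on all of $H$ and then read off both the eight displayed identities and the involution property from the result. Writing out the action of right multiplication, we have $h^z=(hf)^y\,(fcde_m^m)$ for every $h\in H$. Since $K=\langle a^2,b\rangle\times\langle c,d,E\rangle$ and $H=K\sqcup fK$, the eight sets $\{wg\mid g\in\langle c,d,E\rangle\}$ with $w\in\{1,b,a^2,a^2b,f,fb,fa^2,fa^2b\}$ partition $H$, so it suffices to evaluate $z$ on a general element $wg$ of each family, which is exactly what the eight identities assert.

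Before the case analysis I would record the elementary relations used throughout: $f$, $cde_m^m$ and $fcde_m^m$ are involutions; $f\in\langle a,b\rangle\times E$ centralises $\langle c,d,E\rangle$; $(\prod_i e_{2i-1})^2=1$ and $e_m^{2m}=1$; the conjugation values $fbf=a^2b$ and $fa^2f=a^2$, which let me rewrite $wg\cdot f$ as $fk$ or $k$ with $k\in K$ having a controlled $\langle a^2,b\rangle$-part; and the relevant values of $\varphi$, namely $(a^2)^\varphi=b$, $b^\varphi=a^2$, $c^\varphi=c$, $d^\varphi=d$, together with $g^\varphi=g^y$ and $(e_m^m)^\varphi=e_m^m$ for $g\in\langle c,d,E\rangle$.

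With these in hand each case is a direct computation. For $h\in K$ (the first four families) the product $hf$ lies in $fK$, so applying $y$ through the second clause of~\eqref{eq41} introduces the factors $cd$ and $e_m^m$, which accounts for the shape $cd\,g^y\,cd$ in those identities; for $h\in fK$ (the last four families) $hf$ lies in $K$ and $y$ acts simply through $\varphi$, leaving only the single $cd$ coming from the right factor $fcde_m^m$. In each case I finish by right multiplying by $fcde_m^m$ and simplifying via the decomposition $H=\langle a,b\rangle\times\langle c,d\rangle\times E$, collecting the three components separately. For instance, since $gf=fg$ and $(fg)^y=fcd\,g^y e_m^m$, one gets $g^z=(fcd\,g^y e_m^m)(fcde_m^m)=cd\,g^y\,cd$, the two copies of $f$ cancelling and the $E$-factors squaring to $1$; and for $bg$ one uses $bf=f\cdot a^2b$, so that the $\langle a,b\rangle$-component becomes $(ab)(a^2b)(ab)=b$ and $(bg)^z=b\,cd\,g^y\,cd$. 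The remaining six cases run identically. The only genuine work is the $\langle a,b\rangle$-arithmetic: the interaction of conjugation by $f$ with the swap $a^2\leftrightarrow b$ produced by $\varphi$ is where ordering errors are most likely, so it is this $\langle a,b\rangle$-component that I would verify most carefully (the $E$-components are engineered to cancel, providing a convenient consistency check).

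Finally, the involution property follows by applying the eight identities twice. Each family is mapped into itself by $z$ (both the $\langle a^2,b\rangle$-type and membership in $K$ or $fK$ are preserved), and within a family one computes $h^{z^2}=h$ directly; for example $g^{z^2}=(cd\,g^y\,cd)^z=cd\,(cd\,g^y\,cd)^y\,cd=cd\,(cd\,g\,cd)\,cd=g$, using that $y$ fixes $cd$ and $(g^y)^y=g^{y^2}=g$ (Lemma~\ref{lem6}), while the $fK$-cases additionally absorb the leftover $e_m^m$-factors via $e_m^{2m}=1$. Hence $z^2=1$. As $z$ is a product of elements of $\Alt(H)$, we have $z\in\Alt(H)$ by Lemma~\ref{lem3}, and $z\neq1$ since $(a^2)^z=a^2b(cd)^2=a^2b\neq a^2$. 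Therefore $|z|=2$, so $z$ is an involution in $\Alt(H)$, as claimed.
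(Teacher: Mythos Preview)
Your approach is the natural direct computation and matches what the paper's deferred calculation would do: evaluate $h^z=(hf)^y(fcde_m^m)$ on each of the eight cosets $w\langle c,d,E\rangle$, then square. The case analysis and the identities are handled correctly.

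There is, however, one false statement in the involution step. You write that ``each family is mapped into itself by $z$ (both the $\langle a^2,b\rangle$-type and membership in $K$ or $fK$ are preserved)''. But the very identities you derived say $(a^2g)^z=a^2b\cdot cd\,g^y\,cd$ and $(a^2bg)^z=a^2\cdot cd\,g^y\,cd$, so $z$ \emph{interchanges} the $a^2$- and $a^2b$-families; likewise it interchanges the $fa^2$- and $fa^2b$-families. Your worked example for $z^2$ treats only the $w=1$ family, which \emph{is} fixed, so it does not detect this. The fix is trivial: the permutation of the eight families induced by $z$ is itself an involution, so one verifies $z^2=1$ by composing the appropriate pair of identities, e.g.
\[
(a^2g)^{z^2}=(a^2b\,cd\,g^y\,cd)^z=a^2\,cd\,(cd\,g^y\,cd)^y\,cd=a^2\,(cd)^2\,g\,(cd)^2=a^2g,
\]
using $(cd)^y=cd$ and Lemma~\ref{lem6}. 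With that correction the proof stands.
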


\begin{lemma}\label{lem9}
The following hold:
\begin{enumerate}[{\rm (a)}]
\item for each $g\in\langle c,d,E\rangle$,
\begin{align*}
g^{yz}=cdgcd,\quad&(bg)^{yz}=a^2bcdgcd,\\
(a^2g)^{yz}=bcdgcd,\quad&(a^2bg)^{yz}=a^2cdgcd,\\
(fg)^{yz}=fcdgcd,\quad&(fbg)^{yz}=fa^2bcdgcd,\\
(fa^2g)^{yz}=fbcdgcd,\quad&(fa^2bg)^{yz}=fa^2cdgcd;
\end{align*}
\item for each $g\in\langle c,d,E\rangle$,
\begin{align*}
(ag)^{yz}=a^3cdgcd,\quad&(abg)^{yz}=abcdgcd,\\
(a^3g)^{yz}=a^3bcdgcd,\quad&(a^3bg)^{yz}=acdgcd;
\end{align*}
\item $|yz|=6$;
\item $\Fix(yz)=\langle ab,c^2,cd,E\rangle$;
\item $\Fix((yz)^2)=\langle ab,c,d,E\rangle$;
\item $\Fix((yz)^3)=\langle a,b,c^2,cd,E\rangle$.
\end{enumerate}
\end{lemma}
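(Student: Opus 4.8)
The plan is to establish the explicit formulas (a) and (b) by direct computation and then read off (c)--(f) from the combinatorics of how $yz$ permutes the cosets of $L:=\langle c,d,E\rangle$ in $H$. Since $h^{yz}=(h^y)^z$, each value is computed in two stages. For $k\in K$ one uses $k^y=k^\varphi$, and for $fk\in fK$ one uses the defining relation $(fk)^y=fcdk^\varphi e_m^m$ together with Lemma~\ref{lem23}; one then applies the appropriate one of the eight cases of Lemma~\ref{lem12} and simplifies. The representative case is transparent: for $g\in L$ we have $g^{yz}=(g^\varphi)^z=cd\,(g^\varphi)^y\,cd=cd\,g^{\varphi^2}\,cd=cd\,g\,cd$, using $h^z=cd\,h^y\,cd$ from Lemma~\ref{lem12} and $\varphi^2=1$. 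All remaining cases of (a) and the four cases of (b) are entirely analogous; the crucial point making them collapse to the stated clean forms is that $E$ is central and $(cd)^2=(e_m^m)^2=\varphi^2=1$, so the correction factors $cd$, $e_m^m$ and $e_{2\lfloor m/2\rfloor-1}e_{2\lfloor m/2\rfloor}$ introduced by $y$ and by Lemma~\ref{lem12} cancel in pairs.

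Once (a) and (b) are in hand, I would note that $\langle a,b\rangle$ is a transversal for $L$ in $H$ (the factors $\langle a,b\rangle$ and $L\leqslant\langle c,d\rangle\times E$ intersect trivially and $|\langle a,b\rangle|\,|L|=|H|$), so $yz$ induces a permutation on the eight cosets $\{wL\mid w\in\langle a,b\rangle\}$. Reducing the formulas of (a) modulo $L$ (e.g. $fg\in abL$ maps into $abL$, while $fbg\in aL$ maps into $a^3L$) shows that this induced permutation fixes $L$ and $abL$ and acts as the two $3$-cycles $(bL,\,a^2bL,\,a^2L)$ and $(aL,\,a^3L,\,a^3bL)$. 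Hence the permutation induced on cosets has order $3$; in particular $(yz)^2$ moves the coset $bL$ (so $(yz)^2\neq1$) while $(yz)^3$ fixes every coset setwise.

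For the remaining parts I would argue structurally. First, for $g\in L$ one gets $g^{(yz)^2}=cd\,(cd\,g\,cd)\,cd=g$, and the same holds on $abL$; thus $(yz)^2$ fixes $L\cup abL=\langle ab,c,d,E\rangle$ pointwise, and as it moves the other six cosets this is exactly its fixed set, giving~(e). Next, chasing any coset three times through (a)--(b) shows that $(yz)^3$ acts on every coset $wL$ by $wg\mapsto w\,(cd\,g\,cd)$, i.e. it fixes the representative $w$ and conjugates the $L$-component by the involution $cd$; consequently $(yz)^3$ is an involution, nontrivial because conjugation by $cd$ is nontrivial on $\langle c,d\rangle$. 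Combined with the order-$3$ coset action this yields $(yz)^6=1$ with $(yz)^2\neq1\neq(yz)^3$, so $|yz|=6$, which is~(c). The fixed set of $(yz)^3$ is then $\langle a,b\rangle\cdot C_L(cd)=\langle a,b,c^2,cd,E\rangle$, since $C_{\langle c,d\rangle}(cd)=\langle c^2,cd\rangle$; this is~(f). Finally, a fixed point of $yz$ must lie in a coset fixed by the coset action, i.e. in $L\cup abL$, where $g^{yz}=cd\,g\,cd=g$ (resp. $(fg)^{yz}=fg$) holds precisely when $g\in C_L(cd)=\langle c^2,cd,E\rangle$; hence $\Fix(yz)=\langle c^2,cd,E\rangle\cup f\langle c^2,cd,E\rangle=\langle ab,c^2,cd,E\rangle$, which is~(d).

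The main obstacle is the bookkeeping in parts (a) and (b) rather than any conceptual difficulty: one must correctly compose the piecewise-defined $y$ (which behaves differently on $K$ and on $fK$ and injects the twist $\varphi$ together with the factors $cd$ and $e_m^m$) with the eight-case description of $z$ in Lemma~\ref{lem12}, all the while tracking the $E$-components, whose indices are shifted nontrivially by $\varphi$ and whose contribution depends on the parity of $m$ (through $e_m^m$, $\lfloor m/2\rfloor$ versus $\lceil m/2\rceil$, and the extra generator rules when $m$ is odd). Verifying that every parity-dependent correction term cancels is the delicate and tedious step; by contrast, once the clean formulas of (a) and (b) are secured, the deductions of (c)--(f) via the coset action are short and essentially parity-free.
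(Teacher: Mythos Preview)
Your approach is correct. The paper itself omits the proof of this lemma entirely, stating only that it ``is tedious calculation and can be found in~\cite{Xia}'' (the arXiv preprint), so your plan---direct verification of (a) and (b) by composing the piecewise definitions of $y$ and $z$ via Lemmas~\ref{lem23} and~\ref{lem12}, then reading off (c)--(f)---is exactly what is called for. Your coset-action framing for (c)--(f), observing that $yz$ permutes the eight $L$-cosets ($L=\langle c,d,E\rangle$) as two fixed cosets and two $3$-cycles while acting within each coset as $g\mapsto cdgcd$, is a clean organizational device that makes the order and fixed-set computations transparent and, as you note, parity-free once (a) and (b) are in hand.
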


\begin{lemma}\label{lem16}
The following hold:
\begin{enumerate}[{\rm (a)}]
\item if $m$ is even then $|\Fix(xyxz)|=3$;
\item if $m$ is odd then $|\Fix((xyxz)^2)|=3$.
\end{enumerate}
\end{lemma}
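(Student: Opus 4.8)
The plan is to compute the permutation $xyxz$ explicitly enough to solve the fixed-point equation $h^{xyxz}=h$ directly, and then to count the solutions. I would work throughout with the internal direct decomposition $H=\langle a,b\rangle\times\langle c,d,E\rangle$ together with the partition $H=K\sqcup fK$, so that every $h\in H$ is recorded by its $\langle a,b\rangle$-component $w\in\{1,a,a^2,a^3,b,ab,a^2b,a^3b\}$ and a component $g\in\langle c,d,E\rangle$. For the three operations in turn I would use the data already available: $x$ acts as the automorphism of \eqref{eq40} (so its effect on $wg$ is routine, bearing in mind that $x$ interchanges the two dihedral factors and feeds copies of $a^2$ into the $e_{2i}$); $y$ is governed by \eqref{eq41} on $K$ and by Lemma~\ref{lem23} on $aK$; and $z$ is given coset-by-coset in Lemma~\ref{lem12}, equations \eqref{eq2}--\eqref{eq3}. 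A useful shortcut is that the combined step $yz$ is already tabulated in Lemma~\ref{lem9}(a)--(b), so I would try to group the computation so as to exploit $yz$ wherever the intermediate element permits.

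Concretely, I would run the chain $h\xrightarrow{x}h^x\xrightarrow{y}h^{xy}\xrightarrow{x}h^{xyx}\xrightarrow{z}h^{xyxz}$ as a finite case analysis over the coset type of the running element. At each arrow I must first rewrite the current element in the coset form ($k$ or $fk$, with the appropriate $\langle a,b\rangle$-representative) demanded by the piecewise definition of the next map, and this is where the bookkeeping lives, since applying $x$ converts the parity of the $a$-exponent — which decides $K$ versus $fK$ — into the parity of the $c$-exponent, and back. Having assembled the closed form of $xyxz$ in this piecewise fashion, the equation $h^{xyxz}=h$ splits according to the coset of $h$ into a short list of branches, each reducing to an equation for the $\langle c,d,E\rangle$-part of the shape $g=g^{\theta}\nu$ with $\theta$ an automorphism of $\langle c,d,E\rangle$ and $\nu$ a constant; solving these is a linear computation in the elementary abelian part $E$ together with a finite check in $\langle c,d\rangle$.

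The parity of $m$ enters through two channels: the trailing twist $e_m^m$ (which is trivial when $m$ is even and equals $e_m$ when $m$ is odd) and the extra clauses attached to $x$ in \eqref{eq40} and to $\varphi$. These shift the order of $xyxz$ between the two parities, which is exactly why part~(a) treats $xyxz$ itself while part~(b) passes to $(xyxz)^2$; in the odd case I would therefore first square the closed form obtained above and only then solve $h^{(xyxz)^2}=h$. In both cases the branch conditions cut the admissible coset types down to a small set, and the surviving equations for $g$ are shown to have exactly three common solutions in total, one of which is $h=1$ (already fixed since $x,y,z\in\Alt(H)_1$, whence $xyxz\in\Alt(H)_1$). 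Exhibiting the other two fixed points explicitly and verifying that no further solutions occur yields $|\Fix(xyxz)|=3$ and $|\Fix((xyxz)^2)|=3$ respectively.

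I expect the main obstacle to be precisely the coset-tracking through the two applications of $x$: because $x$ swaps the roles of $\langle a,b\rangle$ and $\langle c,d\rangle$ and simultaneously perturbs the $E$-coordinates by $a^2e_{2i-1}e_{2i}$, the membership of the running element in $K$ or $fK$ (and hence which line of Lemma~\ref{lem12} or Lemma~\ref{lem23} applies) must be recomputed at every stage, with the factor $\prod_{i=1}^{\lceil m/2\rceil}e_{2i-1}$ hidden inside $f$ and the parity-sensitive $e_m^m$ demanding constant care. Keeping these normalizations consistent, rather than any single algebraic identity, is the real labor; once the closed form of $xyxz$ is in hand the fixed-point count is immediate.
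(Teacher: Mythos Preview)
Your plan is sound and matches the paper's approach: the paper itself does not print a proof of this lemma, stating only that it follows by ``tedious calculation'' with details relegated to the arXiv version~\cite{Xia}, and that calculation is precisely the coset-by-coset tracking of $h\mapsto h^{xyxz}$ through the piecewise definitions of $x$, $y$, and $z$ that you describe. Your identification of the two sources of $m$-parity dependence (the $e_m^m$ twist and the extra clauses in the definitions of $x$ and $\varphi$) and your anticipation that the real work is the coset bookkeeping rather than any single identity are both accurate.
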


\begin{lemma}\label{lem2}
The following hold:
\begin{enumerate}[{\rm (a)}]
\item $|\Fix(yz)\cap\Fix(yz)^{xyzx^{-1}}|=2^{m+3}$;
\item $|\Fix(yz)^{x^{-1}}\cap\Fix(yz)^{x^{-1}yz}|=2^{m+1}$;
\item $|\Fix(yz)^{x^{-1}}\cap\Fix(yz)^{x^{-1}zy}|=2^{m+1}$;
\item if $m\equiv0\pmod{4}$, then
\[
|\Fix(yz)^{xz}\cap\Fix(xyxz)^{(xy)^{-1}(xz)^{-1}}|=1\quad\text{and}\quad|\Fix(zy)^{xy}\cap\Fix(xzxy)^{(xz)^{-1}(xy)^{-1}}|=2;
\]
\item if $m\equiv1\pmod{4}$, then
\[
|\Fix(yz)\cap\Fix((xyxz)^2)^{(xy)^2(xz)^2}|=1\quad\text{and}\quad|\Fix(zy)\cap\Fix((xzxy)^2)^{(xz)^2(xy)^2}|=2;
\]
\item if $m\equiv2\pmod{4}$, then
\[
|\Fix(yz)^{xz}\cap\Fix(xyxz)^{(xy)^{-1}(xz)^{-1}}|=2\quad\text{and}\quad|\Fix(zy)^{xy}\cap\Fix(xzxy)^{(xz)^{-1}(xy)^{-1}}|=1;
\]
\item if $m\equiv3\pmod{4}$, then
\[
|\Fix(yz)\cap\Fix((xyxz)^2)^{(xy)^2(xz)^2}|=2\quad\text{and}\quad|\Fix(zy)\cap\Fix((xzxy)^2)^{(xz)^2(xy)^2}|=1.
\]
\end{enumerate}
\end{lemma}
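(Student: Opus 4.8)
The plan is to reduce every one of the seven intersections to an explicit computation inside the $2$-group $H=\D_8^2\times\C_2^m$, using two general principles. First, for any permutation $\sigma$ of $H$ and any $g\in\Sym(H)$ one has $\Fix(\sigma)^g=\Fix(g^{-1}\sigma g)$, so a point lies in $\Fix(\sigma)\cap\Fix(\sigma)^g$ exactly when it is fixed by both $\sigma$ and $g^{-1}\sigma g$. Second, because $x$ is an \emph{automorphism} of $H$ whereas $y$ and $z$ are merely permutations, transport by a word in $x^{\pm1}$ alone carries a subgroup to a subgroup, and I would track images of generators explicitly via \eqref{eq40}, \eqref{eq41}, the $x^{-1}$-formulas, and Lemmas~\ref{lem23}, \ref{lem12} and~\ref{lem9}. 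Throughout I would write elements of $H$ in the normal form $a^ib^jc^kd^l w$ with $w\in E$, separating the bounded factor $\D_8^2$ (order $64$, independent of $m$) from the growing factor $E\cong\GF(2)^m$.

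I would begin with parts (a)--(c), where every set in sight is a transport of the single subgroup $\Fix(yz)=\langle ab,c^2,cd,E\rangle$, which has order $2^{m+3}$ by Lemma~\ref{lem9}(d). For (a), the map $xyzx^{-1}$ is a bijection and so preserves cardinality; hence the equality $|\Fix(yz)\cap\Fix(yz)^{xyzx^{-1}}|=2^{m+3}=|\Fix(yz)|$ is equivalent to the setwise invariance $\Fix(yz)^{xyzx^{-1}}=\Fix(yz)$. Thus (a) reduces to checking that $xyzx^{-1}$ stabilises $\Fix(yz)$, which I would verify on the generators $ab,c^2,cd$ and a basis of $E$. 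For (b) and (c) the first set $\Fix(yz)^{x^{-1}}=\Fix(x(yz)x^{-1})$ is an honest subgroup of order $2^{m+3}$, namely the image of $\langle ab,c^2,cd,E\rangle$ under the automorphism $x^{-1}$ (so $ab\mapsto d$, $c^2\mapsto a^2$, $cd\mapsto ab$, and $E\mapsto\langle e_{2i-1},\,c^2e_{2i-1}e_{2i}\rangle$). The second set is the image of this subgroup under the non-automorphism $yz$ (for (b)) or $zy=(yz)^{-1}$ (for (c)), computed coset-by-coset from Lemma~\ref{lem9}(a),(b); the intersection is then cut out by an affine $\GF(2)$-system on the $E$-coordinates whose solution set I expect to have size $2^{m+1}$, with (c) following from (b) by replacing $yz$ throughout by its inverse.

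Parts (d)--(g) are different in flavour, since they involve $\Fix(xyxz)$ or $\Fix((xyxz)^2)$, which by Lemma~\ref{lem16} consist of only three points and are \emph{not} subgroups ($H$ being a $2$-group). Here I would first solve $v^{xyxz}=v$ (respectively $v^{(xyxz)^2}=v$) explicitly, pinning down the $\D_8^2$-part by a finite case analysis and reducing the $E$-part to a constrained affine equation, to obtain the three fixed points in coordinates; I would then transport these by the indicated word and test membership in the relevant transport of $\Fix(yz)$. Because only three candidates need to be checked, each count collapses to $1$ or $2$. The four-way split modulo $4$ enters precisely through $f=ab\prod_{i=1}^{\lceil m/2\rceil}e_{2i-1}$ and the twist $e_m^m$ appearing in the definitions of $y$ and $z$, which equals $e_m$ for odd $m$ and $1$ for even $m$; the paired ``mirror'' equations in each part (swapping $yz\leftrightarrow zy$ and $xyxz\leftrightarrow xzxy$) are related by reversing the roles of $y$ and $z$, so one computation yields both values once the residue of $m$ is tracked.

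The main obstacle is the linear bookkeeping on the $E=\GF(2)^m$ component. The maps $x$ and $\varphi$ act on the $e_i$ by coordinate-coupling index shifts, such as $e_{2i}^x=a^2e_{2i-1}e_{2i}$ and $e_{2i-1}^\varphi=e_{2i-3}e_{2i-2}e_{2i}$, which moreover feed into the central involutions $a^2$ and $c^2$ of the two $\D_8$ factors and have genuine boundary effects at $i=1$ (where $e_0=1$) and at $i=m$ (the $e_m^m$ twist). Determining the exact rank of the resulting banded $\GF(2)$-systems---so as to obtain the constant value $2^{m+1}$ in (b),(c) while seeing the rank oscillate with $m\bmod 4$ to produce the alternation between $1$ and $2$ in (d)--(g)---is the delicate point; by contrast the contribution of the bounded $\D_8^2$ factor is a fixed finite verification independent of $m$, which can be cross-checked by the \magma\ computation recorded in~\cite{Xia} for small $m$.
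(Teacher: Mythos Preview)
Your plan is sound and matches what the paper does, namely direct (if tedious) computation in $H$; the paper's main text in fact omits the proof entirely, stating only that ``it is tedious calculation and can be found in~\cite{Xia}''. Your two organising principles---$\Fix(\sigma)^g=\Fix(g^{-1}\sigma g)$ and the automorphism/non-automorphism dichotomy for $x$ versus $y,z$---are exactly the right framework, as is the strategy of separating the bounded $\D_8^2$ factor from the growing $\GF(2)^m$ component $E$, and your identification of the banded linear systems on $E$ (with boundary effects at $i=1$ and $i=m$) as the locus of the $m\bmod 4$ dependence is correct.

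One caveat on part~(a): since $xyzx^{-1}$ is \emph{not} a homomorphism of $H$, verifying that it stabilises $\Fix(yz)=\langle ab,c^2,cd,E\rangle$ cannot be done merely by checking images of generators, as you propose. The cleanest route is the one you already sketch for~(b) and~(c): first apply the automorphism $x$ to obtain the honest subgroup $\Fix(yz)^x=\langle a^2,b,cd,E\rangle$ (note that $E^x$ together with $a^2$ regenerates $E$), then use Lemma~\ref{lem9}(a) to show that $yz$ permutes this set. Indeed, every element of $\langle a^2,b,cd,E\rangle$ has the form $hg$ with $h\in\{1,b,a^2,a^2b\}$ and $g\in\langle cd,E\rangle$; since such $g$ commutes with $cd$ one has $cdgcd=g$, and the four formulae in Lemma~\ref{lem9}(a) then permute the four $\langle cd,E\rangle$-cosets among themselves. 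Applying $x^{-1}$ returns $\Fix(yz)$, giving~(a). The same coset-by-coset discipline (rather than a generator check) is needed wherever $y$ or $z$ is applied to a subset of $H$ in parts~(b)--(g).
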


\begin{lemma}\label{lem21}
The set $\{x(yz)^ix^{-1},x(yz)^iy\mid i\in\mathbb{Z}\}\cap\{(yz)^j,(yz)^jyx^{-1}\mid j\in\mathbb{Z}\}$ is equal to $\{1\}$.
\end{lemma}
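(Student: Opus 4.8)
The plan is to reduce the statement to finitely many cases and to separate the group elements involved by two invariants: the fixed-point subgroups of powers of $yz$, and the way a permutation moves the subset $K$ inside $H=K\sqcup fK$. Since $|yz|=6$ by Lemma~\ref{lem9}(c), it suffices to let $i,j$ range over $\{0,1,\dots,5\}$. Writing $w=yz$, $u=xy$ and $w'=xwx^{-1}$, I would first record two identities: $(yz)^iy=y(yz)^{-i}$ (as $y$ is an involution with $y^{-1}(yz)y=zy=(yz)^{-1}$) and $yx^{-1}=(xy)^{-1}=u^{-1}$. With these,
\[
\{x(yz)^ix^{-1}\mid i\in\mathbb{Z}\}=\langle w'\rangle,\qquad \{x(yz)^iy\mid i\in\mathbb{Z}\}=u\langle w\rangle,
\]
\[
\{(yz)^j\mid j\in\mathbb{Z}\}=\langle w\rangle,\qquad \{(yz)^jyx^{-1}\mid j\in\mathbb{Z}\}=\langle w\rangle u^{-1},
\]
so the set in the lemma is $(\langle w'\rangle\cup u\langle w\rangle)\cap(\langle w\rangle\cup\langle w\rangle u^{-1})$, splitting into the four intersections $\langle w'\rangle\cap\langle w\rangle$, $\langle w'\rangle\cap\langle w\rangle u^{-1}$, $u\langle w\rangle\cap\langle w\rangle$ and $u\langle w\rangle\cap\langle w\rangle u^{-1}$; clearly $1$ lies in the first, and the task is to show nothing else lies in any of them.

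For $\langle w'\rangle\cap\langle w\rangle$ I would use fixed-point subgroups. If $(w')^i=w^j$ then, since $\Fix((w^i)^{x^{-1}})=\Fix(w^i)^{x^{-1}}$, we get $\Fix((yz)^i)^{x^{-1}}=\Fix((yz)^j)$; comparing orders and invoking Lemma~\ref{lem9}(d)--(f), which give $|\Fix((yz)^i)|=2^{m+3},2^{m+4},2^{m+5}$ for $i\equiv\pm1,\pm2,3\pmod 6$ (and $2^{m+6}$ for $i\equiv0$), forces $i$ and $j$ into the same residue class. Within each nonzero class $\Fix((yz)^i)=\Fix((yz)^j)$, so the condition becomes $\Fix((yz)^i)^{x^{-1}}=\Fix((yz)^i)$, which I would refute by exhibiting a witness, applying $x^{-1}$ (with $a^{x^{-1}}=c^3$, $b^{x^{-1}}=cd$, $c^{x^{-1}}=a$, $d^{x^{-1}}=b$) to the explicit subgroups of Lemma~\ref{lem9}(d)--(f): for example $d=(ab)^{x^{-1}}\in\Fix(yz)^{x^{-1}}\setminus\Fix(yz)$ for the class of $\pm1$, and $c\in\Fix((yz)^i)\setminus\Fix((yz)^i)^{x^{-1}}$ (since $c^x=a^3$ avoids $\Fix((yz)^i)$) for the classes of $\pm2$ and $3$. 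Hence $(w')^i=w^j$ only when $i\equiv j\equiv0$, so $\langle w'\rangle\cap\langle w\rangle=\{1\}$.

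The two mixed intersections I would dispatch by a single observation about the partition $\{K,fK\}$ of $H$. The definition of $y$ and Lemma~\ref{lem12} show that $y$ and $z$, and hence $w$ and every $w^j$, fix both $K$ and $fK$ setwise. By contrast $x$ does not preserve this partition: $(a^2)^x=c^2\in K$ while $c^x=a^3\in fK$, so $K^x$ meets both blocks and therefore $K^u=(K^x)^y$ meets both blocks, giving $K^u\neq K$. Now both $\langle w'\rangle\cap\langle w\rangle u^{-1}$ and $u\langle w\rangle\cap\langle w\rangle$ reduce, after cancelling the outer $x^{\pm1}$ and $y$ (using $xw^iy=uw^{-i}$), to the single equation $u=w^{i+j}$; but $w^{i+j}$ fixes $K$ setwise whereas $u$ does not, so both intersections are empty.

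This leaves $u\langle w\rangle\cap\langle w\rangle u^{-1}$, where the real work lies. An element here would satisfy $uw^{-i}=w^ju^{-1}$, equivalently $uw^{k}u\in\langle w\rangle$ for some $k$, and such an element cannot equal $1$ because that would force $u\in\langle w\rangle$. The block test no longer decides this case, since $u$ now occurs twice and the crossing between $K$ and $fK$ it induces may cancel; this is the main obstacle. I would settle it by direct computation for each $k\in\{0,\dots,5\}$: using the formulas of Lemma~\ref{lem9} for the action of $w$ on coset representatives, together with $x\in\Aut(H)$ and the description of $y$ in Lemma~\ref{lem23}, I would track the image of one or two fixed probe points (for instance $c$, whose $w$-orbit is short and explicit) under $uw^ku$ and verify it never coincides with the image under any $w^j$, so that $uw^ku\notin\langle w\rangle$. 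Carrying out this bookkeeping on probe points rather than on the full permutation is the practical way to beat down the one case not resolved by a soft invariant, and completes the proof that the intersection is $\{1\}$.
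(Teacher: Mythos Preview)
The paper does not give a proof of this lemma in the body; it states that the proof is ``tedious calculation'' relegated to the arXiv appendix. Your approach---organising the $36$ possible coincidences into four coset-intersections and disposing of three of them by soft invariants (fixed-point cardinalities for $\langle w'\rangle\cap\langle w\rangle$, the $K$/$fK$ partition for the two mixed cases)---is sound and more conceptual than a bare enumeration, and the reductions you write down are correct.

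There is one slip. In the fixed-point argument for the class $i\equiv3\pmod 6$ you claim $c\in\Fix((yz)^3)\setminus\Fix((yz)^3)^{x^{-1}}$, justified by $c^x=a^3\notin\Fix((yz)^3)$. But $\Fix((yz)^3)=\langle a,b,c^2,cd,E\rangle$ by Lemma~\ref{lem9}(f): here $c\notin\Fix((yz)^3)$ (its $\langle c,d\rangle$-component lies in $\langle c^2,cd\rangle=\{1,c^2,cd,c^3d\}$), while $a^3\in\langle a,b\rangle\leqslant\Fix((yz)^3)$, so both parts of your claim are false. The repair is immediate: take $a\in\Fix((yz)^3)$ and observe $a^x=c\notin\Fix((yz)^3)$, so $\Fix((yz)^3)$ is not $x$-invariant.

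For the fourth intersection $u\langle w\rangle\cap\langle w\rangle u^{-1}$ you only sketch the verification. The probe-point idea is fine, but note that your suggested probe $c$ has a short $w$-orbit $\{c,c^3\}$ (since $c^{yz}=cdc\,cd=c^3$ and $(c^3)^{yz}=c$), so a single probe may not separate all six values of $k$ from all six values of $j$; you should be prepared to use a second probe (for instance $b$ or $a$) to finish. With that caveat, the plan will go through, and the overall argument is valid once the class-$3$ witness is corrected and the final case is actually carried out.
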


\section{Connectivity of $\Gamma_m$}\label{sec3}

The aim of this section is to prove that $\Gamma_m$ is connected. Let $V=\langle e_1\rangle\times\dots\times\langle e_{2\lfloor m/2\rfloor}\rangle$, and denote by $\mu$ the projection of $H$ to $V$. We need the following technical lemmas, whose proofs can be found in~\cite{Xia}.

\begin{lemma}\label{lem20}
For each $g\in cdE$, the projection $\mu$ maps $(abg)^{\langle xy,xz\rangle}\cap abcdE$ onto $V$.
\end{lemma}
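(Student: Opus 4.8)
The plan is to reduce the statement to an explicit orbit computation resting on two facts already available. First, since $y$ is an involution (Lemma~\ref{lem6}), we have $(xy)^{-1}(xz)=yx^{-1}xz=yz$ and $(xy)(yz)=xz$, so that $\langle xy,xz\rangle=\langle xy,yz\rangle$. Second, by Lemma~\ref{lem9}(d), $abcdE\subseteq\langle ab,cd,E\rangle\leqslant\langle ab,c^2,cd,E\rangle=\Fix(yz)$; hence for every $g\in cdE$ the element $abg$ lies in $abcdE$ and is fixed by $yz$. Consequently the generator $yz$ never moves $abg$ on its own, and, working with reduced words in $\langle xy,yz\rangle$ starting from the fixed point $abg$, every displacement out of (and back into) the coset $abcdE$ is driven by $xy$. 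This recasts the problem as understanding how alternating words in $xy$ and $yz$ push $abg$ out of $abcdE$ and return it, and what $\mu$ records when they do.

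I would then track two pieces of data along such words: first, the position of the image relative to the decomposition $H=K\sqcup fK$ together with its $\langle a\rangle$-coordinate, which detects exactly when an image re-enters $abcdE$; and second, the projection $\mu$ to $V$. All transition rules are explicit: $x$ acts by~\eqref{eq40}, $y$ acts as $\varphi$ on $K$ and by the companion rule of~\eqref{eq41} on $fK$, and $yz$ acts by the coset formulas of Lemma~\ref{lem9}(a)--(b). The goal is to isolate a family of return words $w\in\langle xy,xz\rangle$ with $(abg)^w\in abcdE$ and to show that every $v\in V$ is realised as $\mu((abg)^w)$ for some such $w$. Here the induced maps on $E$ are favourable: by~\eqref{eq40}, $x$ fixes $e_{2i-1}$ and sends $e_{2i}\mapsto a^2e_{2i-1}e_{2i}$, while $\varphi$ interchanges $e_{2i-1}$ and $e_{2i}$ modulo lower-index generators. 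Both maps are triangular (indeed unipotent up to a swap) with respect to the ordering $e_1,e_2,\dots$ on $V\cong\C_2^{2\lfloor m/2\rfloor}$, which should allow a target $v$ to be built up one coordinate at a time, correcting the lowest-index discrepancy at each stage and recursing upward.

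The main obstacle is the coupling between the $E$-coordinates and the $\langle a,b,c,d\rangle$-part. The carry terms $a^2$ appearing in $e_{2i}^x$, together with the index shifts in $\varphi$, mean that altering a $V$-coordinate simultaneously perturbs the $\langle a\rangle$-position, so a word producing a desired $V$-value need not return $abg$ to $abcdE$; the two requirements must be satisfied at once. Controlling this interaction is precisely where the parity of $m$, and in fact its residue modulo $4$, enters — the same phenomenon responsible for the case split in Lemma~\ref{lem2}. I would therefore organise the computation by these residues, in each case pinning down a small explicit set of return words and verifying that their $\mu$-images exhaust $V$; the bookkeeping, though lengthy, is mechanical and can be cross-checked against \magma for small $m$ as recorded in~\cite{Xia}.
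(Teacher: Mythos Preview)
The paper does not actually prove Lemma~\ref{lem20}: together with Lemmas~\ref{lem19}, \ref{lem13} and~\ref{lem15} it is introduced with the sentence ``We need the following technical lemmas, whose proofs can be found in~\cite{Xia},'' so there is no in-paper argument to compare against. What can be said is that the paper's stance---that these are direct but lengthy calculations relegated to the arXiv appendix---is consistent with the spirit of your proposal.

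That said, your proposal is a plan rather than a proof, and the plan has a soft spot you yourself flag but do not resolve. The reductions $\langle xy,xz\rangle=\langle xy,yz\rangle$ and $abcdE\subseteq\Fix(yz)$ are correct and useful. The difficulty is the claim that the induced action on $V$ is ``triangular (indeed unipotent up to a swap)'' in a way that lets you correct one coordinate at a time. Look again at $\varphi$: it sends $e_{2i-1}\mapsto e_{2i-3}e_{2i-2}e_{2i}$ and $e_{2i}\mapsto e_{2i-3}e_{2i-2}e_{2i-1}$, so it shifts indices \emph{downward} while $x$ acts within each block $\{e_{2i-1},e_{2i}\}$. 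The composite effect on $V$ is not triangular with respect to a single ordering of the $e_j$, and a naive ``fix the lowest discrepancy and recurse upward'' scheme will have later steps disturb earlier ones through the $e_{2i-3}e_{2i-2}$ carries. You need either an explicit finite family of return words whose $\mu$-images you can prove span $V$ (over $\mathbb{F}_2$), or an inductive invariant that survives these carries; acknowledging that the bookkeeping splits by $m\bmod 4$ is not a substitute for exhibiting such words or such an invariant. Until that is done, the proposal identifies the right objects but does not establish surjectivity of $\mu$.
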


\begin{lemma}\label{lem19}
The set $(abcdE)^{\langle xy,xz\rangle}$ contains $H\setminus\langle c^2,E\rangle$.
\end{lemma}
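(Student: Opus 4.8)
The plan is to derive the containment from Lemma~\ref{lem20} together with an explicit analysis of how $xy$ and $xz$ move the cosets of the central subgroup $\langle c^2,E\rangle$. First I would reduce the set-theoretic statement to a coset count. Since $c^2$ and every $e_i$ are central, $\langle c^2,E\rangle\leqslant\Z(H)$ is normal, and $H/\langle c^2,E\rangle\cong\D_8\times\C_2^2$ has order $32$; thus $H\setminus\langle c^2,E\rangle$ is the union of the $31$ nonidentity cosets of $\langle c^2,E\rangle$. It therefore suffices to prove that the orbit $(abcdE)^{\langle xy,xz\rangle}$ contains each of these $31$ cosets in its entirety. Note that each such coset splits into two cosets of $E$ (differing by $c^2$), each of size $2^m$, whereas a single image $(abcdE)^g$ has only $2^m$ elements; so at least two group elements are needed to fill any one target coset, and no single application of the generators can suffice.

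The first concrete step is to show that the base coset $abcdE$ lies entirely in the orbit. For $m$ even this is immediate: here $V=E$, so Lemma~\ref{lem20} applied with $g=cd$ says that the set $\{w\in E\mid abcd\,w\in(abcd)^{\langle xy,xz\rangle}\}$ is all of $E$, whence $abcdE\subseteq(abcd)^{\langle xy,xz\rangle}$. For $m$ odd the projection $\mu$ ignores the coordinate $e_m$, so Lemma~\ref{lem20} only controls the $\langle e_1,\dots,e_{m-1}\rangle$-part; I would then supply one further element of $\langle xy,xz\rangle$ that fixes the coset $abcdE$ setwise and toggles $e_m$, obtained by composing~\eqref{eq40}, \eqref{eq41} and Lemma~\ref{lem12} (note that $yz=(xy)^{-1}(xz)\in\langle xy,xz\rangle$ fixes $abcdE$ pointwise by Lemma~\ref{lem9}, so a genuinely different word is required here).

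With $abcdE$ in hand, the main step is a sweep: choose explicit words in $xy^{\pm1}$ and $xz^{\pm1}$, compute the images of $abcdE$ using the automorphism action~\eqref{eq40} of $x$, the permutation rule~\eqref{eq41} and Lemma~\ref{lem23} for $y$, and Lemma~\ref{lem12} for $z$, and verify that the resulting images jointly cover all $31$ target cosets and, within each, both $c^2$-halves and the full range of $e_m$. The key complication, which I expect to be the main obstacle, is that neither $xy$ nor $xz$ acts as a clean permutation of cosets: applying $x$ converts the $e_{2i}$-coordinates into factors of $a^2$ by~\eqref{eq40}, and the rule~\eqref{eq41} then feeds these through $\varphi$, which swaps $a^2$ and $b$, so each generator couples the coarse coordinates $a^2,b,c^2$ to the fine $E$-coordinate. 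Consequently a single image $(abcdE)^g$ is not a coset of $E$ but a $2^m$-element set smeared across distinct coset-types with correlated constraints on its $a^2$/$b$-part and its $E$-part. The role of Lemma~\ref{lem20} is precisely to certify that, despite this smearing, the $V$-directions are filled surjectively; the explicit sweep then contributes the coset-type coverage and the two $c^2$-halves, and the odd-$m$ toggle handles $e_m$. The heart of the write-up is the bookkeeping that combines these three inputs to confirm that no element of any nonidentity coset is missed.
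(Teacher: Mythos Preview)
The paper does not actually give a proof of this lemma here; it is one of the four technical lemmas at the start of Section~\ref{sec3} whose proofs are deferred to the companion~\cite{Xia} as ``tedious calculation''. So there is no in-paper argument to compare against directly, and what you are proposing is essentially a plan for that tedious calculation. In spirit this is surely right---there is presumably no way around a substantial case-by-case verification---but your plan has two genuine gaps worth flagging.

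First, Lemma~\ref{lem20} is weaker than you are using it for. It tells you that for each starting point $abg\in abcdE$ the orbit $(abg)^{\langle xy,xz\rangle}$ meets the single coset $abcdE$ in a set projecting onto~$V$. It says nothing about how any orbit meets the \emph{other} $E$-cosets inside $H\setminus\langle c^2,E\rangle$. So when you write that ``the role of Lemma~\ref{lem20} is precisely to certify that the $V$-directions are filled surjectively'' during the sweep, that is not what the lemma provides: once you have left $abcdE$ you have no surjectivity statement available, and you must track the full $E$-coordinate by hand. In other words, Lemma~\ref{lem20} helps with your Step~1 (merging $abcdE$ into a single orbit, at least for $m$ even) but does not help with Step~2.

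Second, the sweep itself is the entire content of the lemma, and you have not said what words you will use or how many are needed. You correctly observe that $(abcdE)^g$ is not an $E$-coset, so you cannot hope to hit one target coset per word; the bookkeeping must therefore track, for each of the $62$ nontrivial $E$-cosets, which specific elements land there under which words, uniformly in~$m$. That is exactly the tedious part, and your proposal stops short of it. Likewise the odd-$m$ ``$e_m$-toggle'' is asserted to exist but not exhibited; since $yz$ fixes $abcdE$ pointwise (as you note), producing such a toggle is a nontrivial search in its own right. These are not fatal objections---the approach can certainly be made to work---but as written the proposal is a description of the shape of the computation rather than a proof.
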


\begin{lemma}\label{lem13}
There is no nonempty subset of $\langle c^2,E\rangle\setminus\{1\}$ stabilized by $\langle xy\rangle$.
\end{lemma}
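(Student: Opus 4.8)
The plan is to recast the statement in terms of orbits and then reduce it to a single spanning fact over $\mathbb{F}_2$. Write $L=\langle c^2,E\rangle$, an elementary abelian $2$-group of rank $m+1$. A nonempty subset of $L\setminus\{1\}$ is stabilized by $\langle xy\rangle$ exactly when it is a union of $\langle xy\rangle$-orbits contained in $L\setminus\{1\}$; since $1^{xy}=1$ and $xy$ is a bijection of the finite set $H$, this occurs iff some nontrivial $w\in L$ has its whole forward orbit $w,w^{xy},w^{(xy)^2},\dots$ inside $L$. Thus it suffices to prove that $B:=\{w\in L\mid w^{(xy)^k}\in L\text{ for all }k\geqslant0\}$ equals $\{1\}$.

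First I would check that $xy$ acts on $L$ through a group homomorphism. From~\eqref{eq40} one reads off $(c^2)^x=a^2$, $e_{2i-1}^x=e_{2i-1}$ and $e_{2i}^x=a^2e_{2i-1}e_{2i}$ (together with $e_m^x=a^2e_m$ when $m$ is odd), so $L^x\subseteq K$; as $y$ restricts on $K$ to the automorphism $\varphi$ by~\eqref{eq41}, the map $\psi:=(xy)|_L\colon L\to K$, $\psi(w)=(w^x)^\varphi$, is an injective homomorphism. Consequently the descending chain $L\supseteq\{w\in L\mid\psi(w)\in L\}\supseteq\{w\in L\mid\psi(w),\psi^2(w)\in L\}\supseteq\cdots$ consists of subgroups (each a preimage of $L$ under a homomorphism), so $B$ is a subgroup of $L$ and $\psi$ restricts to an automorphism of $B$.

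Next I would compute $\psi$ on the generating involutions of $L$ using the definition of $\varphi$, obtaining $\psi(c^2)=b$, $\psi(e_{2i-1})=e_{2i-3}e_{2i-2}e_{2i}$ and $\psi(e_{2i})=b\,e_{2i-1}e_{2i}$ (with $\psi(e_m)=b\,e_m$ for odd $m$), so that $\psi(L)\subseteq\langle b\rangle\times E$. Reading the $\langle b\rangle$-component shows that $\psi(w)\in L$ forces that component to vanish and places $\psi(w)$ inside $E$; setting $\overline{\psi}$ to be the $E$-part of $\psi|_E$ and $\gamma\colon E\to\mathbb{F}_2$, $\gamma(v)=\sum_i\delta_{2i}$ (plus $\delta_m$ when $m$ is odd) for $v=\prod_j e_j^{\delta_j}$, one has $\psi(u)=b^{\gamma(u)}\overline{\psi}(u)$ for $u\in E$. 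The map $\overline{\psi}$ is block-lower-triangular with diagonal block $\left(\begin{smallmatrix}0&1\\1&1\end{smallmatrix}\right)$ on each pair $\{e_{2i-1},e_{2i}\}$, hence $\overline{\psi}\in\GL(E)$. Combining these facts, an element $u\in E$ lies in $B$ iff $\gamma(\overline{\psi}^k u)=0$ for all $k\geqslant0$, so $B\cap E$ is the largest $\overline{\psi}$-invariant subspace of $\ker\gamma$; and an element $c^2v\in B$ forces $\gamma(v)=1$ and $\overline{\psi}(v)\in B\cap E$. Therefore, once $B\cap E=\{1\}$, the injectivity of $\overline{\psi}$ excludes the elements with a $c^2$-component, and $B=\{1\}$ follows.

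The entire lemma thus reduces to the claim that the functionals $\gamma,\gamma\overline{\psi},\gamma\overline{\psi}^2,\dots$ span $\mathrm{Hom}(E,\mathbb{F}_2)$, equivalently that $\ker\gamma$ contains no nonzero $\overline{\psi}$-invariant subspace. This observability statement is the main obstacle. I expect it to go by induction on $m$, peeling off the top block (the blocks $1,\dots,t-1$ form an invariant subspace, while the top block is an invariant quotient carrying the invertible, and for even $m$ irreducible, diagonal map), and computing $\gamma\overline{\psi}^k$ far enough to exhibit each coordinate. The delicate part is the bookkeeping for the boundary generator $e_m$ and the case split according to $m\bmod 2$; the small cases $m=1,2,3$ can be verified by hand and agree with the detailed computations in~\cite{Xia}.
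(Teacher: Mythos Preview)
The paper does not prove Lemma~\ref{lem13} in its body; it simply states that the proof ``can be found in~\cite{Xia}'' and describes it elsewhere as ``tedious (but direct) calculation''. So there is no argument here to compare your reduction against; what can be assessed is whether your proposal actually closes.

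Your reduction is sound. Writing $L=\langle c^2,E\rangle$, showing that $\psi=(xy)|_L$ is an injective homomorphism $L\to K$ with image in $\langle b\rangle\times E$, and identifying $B=\bigcap_k\psi^{-k}(L)$ as a subgroup on which everything is controlled by the $b$-exponent functional $\gamma$ and the invertible $E$-map $\overline{\psi}$ is all correct; the explicit values $\psi(c^2)=b$, $\psi(e_{2i-1})=e_{2i-3}e_{2i-2}e_{2i}$, $\psi(e_{2i})=b\,e_{2i-1}e_{2i}$ (and $\psi(e_m)=b\,e_m$ for $m$ odd) check out, and the elimination of the $c^2$-coset once $B\cap E=\{1\}$ is clean.

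The gap is exactly where you flag it: the observability claim that $\gamma,\gamma\overline{\psi},\gamma\overline{\psi}^2,\dots$ span $E^*$ is the entire content of the lemma, and you do not prove it. Your proposed induction ``peeling off the top block'' is not straightforward for even $m$: the invariant subspace $V'=\langle e_1,\dots,e_{m-2}\rangle$ lets you pass to the quotient, but $\gamma$ does not factor through $E/V'$ (it has support in every block), so knowing that the top-quotient pair $(A,\delta_m)$ is observable does not directly force a putative $W\subseteq\ker\gamma$ into $V'$. The odd case does reduce nicely to the even case $m-1$ (a fixed-point argument with $A-I$ invertible handles the extra $e_m$), but the even step needs a genuine computation, essentially the same ``tedious'' work the paper outsources to~\cite{Xia}. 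As written, your proposal ends by appealing to~\cite{Xia} for small cases rather than carrying the induction through, so it is not yet a proof.
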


\begin{lemma}\label{lem15}
Let $B$ be a subgroup of $H$ stabilized by $\langle xy,xz\rangle$. Then $B=1$ or $H$.
\end{lemma}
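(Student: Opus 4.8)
The plan is to argue directly: assuming $B\neq1$, I would show $B=H$. The whole argument rests on the three preceding technical lemmas together with two elementary facts, namely that $\langle xy,xz\rangle$ fixes the identity $1\in H$ (so it permutes $B\setminus\{1\}$) and that the factor $E$ is central in $H$ (so the factor $abcd$ cancels freely against elements of $E$).

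First I would dispose of the case $B\leqslant\langle c^2,E\rangle$. Here $B\setminus\{1\}$ is a nonempty subset of $\langle c^2,E\rangle\setminus\{1\}$ stabilized by $\langle xy\rangle\leqslant\langle xy,xz\rangle$, which is impossible by Lemma~\ref{lem13}. Hence $B$ contains some $h\in H\setminus\langle c^2,E\rangle$. By Lemma~\ref{lem19} we have $h\in(abcdE)^{\langle xy,xz\rangle}$, so $h^{\sigma^{-1}}\in abcdE$ for some $\sigma\in\langle xy,xz\rangle$; as $B$ is $\langle xy,xz\rangle$-invariant, the element $w_0:=h^{\sigma^{-1}}$ lies in $B\cap abcdE$. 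Writing $w_0=abcde_0$ with $e_0\in E$ and using centrality of $E$, one checks $B\cap abcdE=w_0(B\cap E)$, so that $\mu(B\cap abcdE)=\mu(w_0)\mu(B\cap E)$ is a coset of $\mu(B\cap E)$ in $V$. On the other hand $w_0=ab(cde_0)$ with $cde_0\in cdE$, so Lemma~\ref{lem20} gives $\mu(w_0^{\langle xy,xz\rangle}\cap abcdE)=V$, and since $w_0^{\langle xy,xz\rangle}\subseteq B$ this forces $\mu(B\cap abcdE)=V$. A coset of a subgroup that equals the whole group forces that subgroup to be the whole group, whence $\mu(B\cap E)=V$.

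The endgame is clean once $E\leqslant B$ is known: then $abcdE=w_0E\subseteq B$, so by invariance $(abcdE)^{\langle xy,xz\rangle}\subseteq B$, and Lemma~\ref{lem19} yields $B\supseteq H\setminus\langle c^2,E\rangle$. Since $|\langle c^2,E\rangle|=2^{m+1}$, the set $H\setminus\langle c^2,E\rangle$ comprises more than half the elements of $H$, and a subgroup containing more than half of $H$ must equal $H$. Thus it suffices to prove $E\leqslant B$.

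The deduction of $E\leqslant B$ from $\mu(B\cap E)=V$ is immediate when $m$ is even, since then $V=E$ and $\mu$ restricts to the identity on $E$, forcing $B\cap E=E$. When $m$ is odd, however, $V=\langle e_1,\dots,e_{m-1}\rangle$ has index $2$ in $E$ with $\ker(\mu|_E)=\langle e_m\rangle$, and the condition $\mu(B\cap E)=V$ leaves open the possibility that $B\cap E$ is a diagonal complement of order $2^{m-1}$ missing $e_m$. \textbf{This is the main obstacle:} one must show $e_m\in B$ to exclude the diagonal case. I expect this to be forced by a direct computation exploiting the factors $e_m^m$ carried by the action of $z$ (and hence of $xz$) recorded in Lemma~\ref{lem12}: applying a suitable element of $\langle xy,xz\rangle$ to $w_0$ and comparing the result with an $E$-translate already known to lie in $B$ should produce a nontrivial element of $B\cap\langle e_m\rangle$, i.e.\ $e_m$ itself. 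Once $e_m\in B$ is established, $E\leqslant B$ follows and the bootstrap above completes the proof.
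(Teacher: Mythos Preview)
The paper itself does not give a proof of this lemma; the surrounding text defers the argument to the arXiv companion~[Xia], so there is no in-paper proof to compare against. What can be said is that your overall architecture---first force an element of $B$ into $abcdE$ via Lemmas~\ref{lem13} and~\ref{lem19}, then use Lemma~\ref{lem20} to push $\mu(B\cap E)$ up to $V$, then bootstrap $E\leqslant B\Rightarrow abcdE\subseteq B\Rightarrow H\setminus\langle c^2,E\rangle\subseteq B\Rightarrow B=H$---is sound, and for even $m$ your argument is complete and correct. The coset identity $B\cap abcdE=w_0(B\cap E)$, the surjectivity step via Lemma~\ref{lem20}, and the ``more than half'' endgame all check out.

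The genuine gap is exactly the one you flag for odd $m$. There $\ker(\mu|_E)=\langle e_m\rangle$ is nontrivial, and $\mu(B\cap E)=V$ is compatible with $B\cap E$ being any one of the $2^{m-1}$ index-$2$ complements to $\langle e_m\rangle$ in $E$. None of Lemmas~\ref{lem20}, \ref{lem19}, \ref{lem13} as stated says anything about the $e_m$-coordinate: Lemma~\ref{lem20} only asserts surjectivity of $\mu$ on the relevant orbit-slice, not injectivity or any fibre information. So your sentence ``I expect this to be forced by a direct computation exploiting the factors $e_m^m$ carried by the action of $z$'' is a promissory note, not a proof. To close the gap you would need to actually exhibit two elements of $w_0^{\langle xy,xz\rangle}\cap abcdE$ with the same $\mu$-image (their product is then $e_m\in B$), or otherwise produce $e_m$ explicitly; this requires tracking the $e_m$-component through concrete applications of $xy$ and $xz$ to $w_0$, and that calculation is precisely what the paper offloads to~[Xia]. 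Until it is written down, the odd-$m$ case remains open in your proposal.
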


Now we prove the connectivity of $\Gamma_m$.

\begin{proposition}\label{prop1}
The graph $\Gamma_m$ is connected.
\end{proposition}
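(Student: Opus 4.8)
The plan is to show that $\Gamma_m$ is connected by using the standard criterion: the coset graph $\Cos(\Alt(H),R(H),R(H)\{xy,(xy)^{-1}\}R(H))$ is connected if and only if $\langle R(H),xy\rangle=\Alt(H)$. Since $R(H)\cong H$ is a regular subgroup, this amounts to showing that the subgroup $G:=\langle R(H),xy\rangle$ acts transitively on $H$ (equivalently, that $G\geqslant R(H)$ already gives transitivity) and that, together with $xy$, it generates all of $\Alt(H)$. Here the technical Lemmas~\ref{lem20}--\ref{lem15} are tailored precisely to drive this argument, so the work is to assemble them correctly.

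First I would set $G=\langle R(H),xy\rangle$ and observe that $G$ is transitive on $H$ because $R(H)$ already is. Hence $G=R(H)G_1$, where $G_1$ is the stabilizer in $G$ of the point $1\in H$. Since $R(H)$ is regular, the connectivity (indeed the claim $G=\Alt(H)$) will follow once I show $G_1=\Alt(H)_1$, i.e.\ that $G_1$ is the full point stabilizer $\A_{2^{m+6}-1}$. To get at $G_1$, note that $xy$ fixes $1$ (as both $x$ and $y$ do), so $xy\in G_1$, and more generally $G_1\supseteq\langle xy,xz\rangle$ once one checks $xz\in G_1$; conjugating $xy$ by suitable elements of $R(H)$ that fix $1$ produces such elements, and this is where $z=R(f)yR(fcde_m^m)\in\Alt(H)_1$ enters. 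The strategy is then to show that $\langle xy,xz\rangle$, acting on $H$, has no proper nontrivial block system and no proper nontrivial invariant subset beyond what the lemmas rule out.

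The core of the argument is a block/orbit analysis on $H$ under the group $\langle xy,xz\rangle\leqslant\Alt(H)_1$. Lemma~\ref{lem19} shows that the orbit of $abcdE$ already contains everything outside the small subgroup $\langle c^2,E\rangle$; Lemma~\ref{lem20} refines this by showing the projection $\mu$ sweeps across all of $V$, so the orbit cannot be confined to a coset structure compatible with $V$; and Lemma~\ref{lem13} kills the only remaining possibility, namely a nonempty $\langle xy\rangle$-stable subset sitting inside $\langle c^2,E\rangle\setminus\{1\}$. Combining these, the only $\langle xy,xz\rangle$-invariant subsets of $H\setminus\{1\}$ that could survive are forced to be empty or all of $H\setminus\{1\}$, so $\langle xy,xz\rangle$ is transitive on $H\setminus\{1\}$. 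Finally, Lemma~\ref{lem15} (no proper nontrivial invariant subgroup) rules out the imprimitive configurations, upgrading transitivity to the statement that $\langle R(H),xy\rangle$ is primitive and hence, being a transitive group containing the regular $R(H)$ together with generators lying in $\Alt(H)$, equals $\Alt(H)$.

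The main obstacle I expect is not any single estimate but the bookkeeping in the orbit analysis: one must track the $\langle xy,xz\rangle$-orbit of a well-chosen coset representative through repeated application of $x$, $y$, $z$ (using the explicit formulas in Lemmas~\ref{lem23}, \ref{lem12} and \ref{lem9}) and verify that the images genuinely escape every candidate block. The delicate point is ensuring the argument is uniform in the parity of $m$ and in $m\bmod 4$, since the definitions of $f$, $\varphi$ and the fixed-point computations in Lemma~\ref{lem2} all branch on these residues; the invariant-subset lemmas are designed to absorb this, but applying them requires checking that the chosen starting element and its orbit behave correctly in each residue class. Once transitivity on $H\setminus\{1\}$ and the absence of invariant subgroups are in place, the conclusion $\langle R(H),xy\rangle=\Alt(H)$, and therefore the connectivity of $\Gamma_m$, is immediate from the connectivity criterion for coset graphs.
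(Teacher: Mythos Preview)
Your proposal contains a genuine gap at the final step. You assert that once $G=\langle R(H),xy\rangle$ is primitive (and perhaps even $2$-transitive), then ``being a transitive group containing the regular $R(H)$ together with generators lying in $\Alt(H)$, equals $\Alt(H)$.'' This is false: $\AGL_{m+6}(2)$ acting on $\mathbb{F}_2^{m+6}$ is $2$-transitive, contains a regular elementary abelian subgroup, lies in $\Alt(H)$, and is a proper subgroup of $\Alt(H)$. There is nothing in your outline that excludes $G$ from being such an affine group, or from being some other primitive proper subgroup. The lemmas you cite (Lemmas~\ref{lem20}--\ref{lem15}) do establish primitivity of $G$ and control the $\langle xy,xz\rangle$-orbits on $H\setminus\{1\}$, but they do not by themselves pin down $G$.

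The paper's proof fills this gap with substantial extra input. After establishing primitivity via Lemma~\ref{lem15}, it observes from Lemma~\ref{lem9}(f) that $(yz)^3$ has fixed-point ratio $1/2$, and invokes the Guralnick--Magaard theorem to force $G$ into one of two families: affine over $\mathbb{F}_2$, or a product-action group with socle $\A_n^\ell$. The affine case is eliminated because Lemma~\ref{lem16} produces an element of $G_1$ with exactly three fixed points, impossible for a linear map over $\mathbb{F}_2$. The product-action case forces $\ell\geqslant 2$, hence $G$ is not $2$-transitive, so $\langle xy,xz\rangle$ has at least three orbits on $H$; but then Lemma~\ref{lem20} (combined with equation~\eqref{eq26}) shows there are at most three orbits, forces $m$ odd, and yields $\ell=2$, making $|H|=2^{m+6}$ a perfect square---contradicting $m$ odd. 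You have identified the right preparatory lemmas and the primitivity argument, but the classification step and the two eliminations are the heart of the proof and are missing from your plan. (A smaller issue: your claim that the lemmas directly give transitivity of $\langle xy,xz\rangle$ on $H\setminus\{1\}$ is only immediate for $m$ even, since for $m$ odd Lemma~\ref{lem20} only guarantees that each orbit meets $abcdE$ in at least half its elements.)
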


\begin{proof}
Let $G=\langle R(H),xy\rangle$. Then Lemma~\ref{lem3} implies that $G\leqslant\Alt(H)$. Suppose for a contradiction that $\Gamma_m$ is disconnected. Then as
\[
\Gamma_m=\Cos(\Alt(H),R(H),R(H)\{xy,(xy)^{-1}\}R(H)),
\]
it follows that $G<\Alt(H)$. Note that $xy$ stabilizes $H\setminus\{1\}$. For each $g\in\langle c^2,E\rangle\setminus\{1\}$, we derive from Lemma~\ref{lem13} that $g^{\langle xy\rangle}\nsubseteq\langle c^2,E\rangle$. Accordingly, $g^{\langle xy,xz\rangle}\cap(H\setminus\langle c^2,E\rangle)\neq\emptyset$. This implies that $g\in(H\setminus\langle c^2,E\rangle)^{\langle xy,xz\rangle}$, which in conjunction with Lemma~\ref{lem19} yields $g\in(abcdE)^{\langle xy,xz\rangle}$. Since $g$ is an arbitrary element of $\langle c^2,E\rangle\setminus\{1\}$, it follows that $\langle c^2,E\rangle\setminus\{1\}\subseteq(abcdE)^{\langle xy,xz\rangle}$. Combining this with Lemma~\ref{lem19} we obtain
\begin{equation}\label{eq26}
H\setminus\{1\}\subseteq(abcdE)^{\langle xy,xz\rangle}.
\end{equation}

Suppose that $G$ has an imprimitive block system $\calB$ on $H$. Let $B$ be the block in $\calB$ containing $1$, where $1<|B|<|H|$. Then for each $g\in B$, as $R(g)$ maps $1$ to $g$, the permutation $R(g)$ stabilizes $B$, that is, $Bg=B$. This shows that $B$ is a subgroup of $H$. Since $x\in\Aut(H)$, we have $R(H)x=xR(H)$ and hence
\[
xz=xR(f)yR(fcde_m^m)\in xR(H)yR(H)=R(H)xyR(H)\subseteq G.
\]
As $xy$ and $xz$ both fix $1\in B$, we derive that $B$ is stabilized by $\langle xy,xz\rangle$, contradicting Lemma~\ref{lem15}. Therefore, $G$ is primitive.

From Lemma~\ref{lem9}(f) we see that the fixed point ratio $|\Fix((yz)^3)|/|H|$ of $(yz)^3$ is $|\langle a,b,c^2,cd,E\rangle|/|H|=1/2$. Then since $|H|$ is a power of $2$, we conclude from~\cite[Theorem~1]{GM1998} that one of the following two cases occurs:
\begin{enumerate}[{\rm (i)}]
\item $G$ is an affine group over $\mathbb{F}_2$.
\item $G$ has socle $\A_n^\ell$ for some integers $n\geqslant5$ and $\ell\geqslant1$ such that $G\leqslant\Sy_n\wr\Sy_\ell$, where $\Sy_n$ acts on the set of $k$-subsets of an $n$-set for some $k\leqslant n/4$ and the wreath product is in product action.
\end{enumerate}
If~(i) occurs, then every element of the stabilizer $G_1$ of $1$ is a linear transformation over $\mathbb{F}_2$ and hence has the number of fixed points a power of $2$. However, it follows from Lemma~\ref{lem16} that $G_1$ has an element with exactly three fixed points. Thus~(i) does not occur. Now we have~(ii). In particular, $|H|=\binom{n}{k}^\ell$, and so $\binom{n}{k}$ is a power of $2$. This implies that the subgroup of $\A_n$ stabilizing $\{1,\dots,k\}$ has index a power of $2$ in $\A_n$. We then conclude from~\cite[Theorem~1]{Guralnick1983} that $k=1$. Hence $|H|=n^\ell$. Then as $G<\Alt(H)$ and $G$ has socle $\A_n^\ell$, it follows that $\ell\geqslant2$. Consequently, $G$ is not a $2$-transitive group, and so $G_1$ has at least three orbits on $H$. In particular, $\langle xy,xz\rangle$ has at least three orbits on $H$, which implies that $\langle xy,xz\rangle$ is not transitive on $H\setminus\{1\}$.

Take any $h_1\in abcdE$. If $abcdE\subseteq h_1^{\langle xy,xz\rangle}$, then~\eqref{eq26} would imply that $H\setminus\{1\}\subseteq h_1^{\langle xy,xz\rangle}$ and thus ${\langle xy,xz\rangle}$ is transitive on $H\setminus\{1\}$, a contradiction. Hence $abcdE\nsubseteq h_1^{\langle xy,xz\rangle}$. Take any $h_2\in abcdE\setminus h_1^{\langle xy,xz\rangle}$. Then
\begin{equation}\label{eq27}
h_1^{\langle xy,xz\rangle}\cap h_2^{\langle xy,xz\rangle}=\emptyset.
\end{equation}
Note from Lemma~\ref{lem20} that $|h_i^{\langle xy,xz\rangle}\cap abcdE|\geqslant|V|=|abcdE|/\gcd(2,m-1)$ for $i=1,2$. We conclude from~\eqref{eq27} that $m$ is odd and
\[
|h_i^{\langle xy,xz\rangle}\cap abcdE|\geqslant|abcdE|/2\quad\text{for $i=1,2$}.
\]
Combining this with~\eqref{eq27} we derive that $abcdE\subseteq h_1^{\langle xy,xz\rangle}\cup h_2^{\langle xy,xz\rangle}$, which together with~\eqref{eq26} leads to $H\setminus\{1\}\subseteq h_1^{\langle xy,xz\rangle}\cup h_2^{\langle xy,xz\rangle}$. This implies that $\langle xy,xz\rangle$ has at most three orbits on $H$. Then as $G\leqslant\Sy_n\wr\Sy_\ell$, where $\Sy_n$ acts naturally on $n$ points and the wreath product is in product action, it follows that $\ell=2$ and so $|H|=n^2$ is a square. However, $|H|=|\D_8^2\times\C_2^m|=2^{m+6}$ with $m$ odd, a contradiction. The proof is thus complete.
\end{proof}

\section{$\Gamma_m$ as a Cayley graph}\label{sec4}

In this section we shall prove that $\Gamma_m$ is isomorphic to $\Cay(\Alt(H)_1,S)$, where $S=\{xy,(xy)^{-1},xz,(xz)^{-1}\}$, and $\Cay(\Alt(H)_1,S)$ is a nonnormal Cayley graph on $\Alt(H)_1\cong\A_{2^{m+6}-1}$ with $\Aut(\Alt(H)_1,S)=1$.

\begin{lemma}\label{lem1}
The following hold:
\begin{enumerate}[{\rm (a)}]
\item $R(H)xyR(H)=R(H)xy\sqcup R(H)xz$;
\item $R(H)(xy)^{-1}R(H)=R(H)(xy)^{-1}\sqcup R(H)(xz)^{-1}$;
\item $R(H)\{xy,(xy)^{-1}\}R(H)=R(H)xy\sqcup R(H)(xy)^{-1}\sqcup R(H)xz\sqcup R(H)(xz)^{-1}$.
\end{enumerate}
\end{lemma}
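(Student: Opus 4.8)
The plan is to reduce all three parts to a single fact about the double coset $R(H)yR(H)$ and then transfer it by the automorphism $x$. The structural fact that makes this work is that $x\in\Aut(H)$ normalizes $R(H)$: for $h\in H$ one has $x^{-1}R(h)x=R(h^x)$, so $R(H)x=xR(H)$. Moreover, the definition $z=R(f)yR(fcde_m^m)$ gives $R(H)z=R(H)yR(fcde_m^m)$, so $z\in R(H)yR(H)$; applying $xR(f)=R(f^{x^{-1}})x$ it also shows $xz=R(f^{x^{-1}})(xy)R(fcde_m^m)\in R(H)(xy)R(H)$. The core claim I would isolate is the ``$y$-version''
\begin{equation*}
R(H)yR(H)=R(H)y\sqcup R(H)z. \tag{$\star$}
\end{equation*}
Granting $(\star)$, part (a) follows by left multiplying by the bijection $x$: from $R(H)x=xR(H)$ we get $x\,R(H)yR(H)=R(H)(xy)R(H)$, while $xR(H)y=R(H)(xy)$ and $xR(H)z=R(H)(xz)$, so $(\star)$ turns into $R(H)(xy)R(H)=R(H)xy\sqcup R(H)xz$.

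To prove $(\star)$ I would first note that $R(H)y$ and $R(H)z$ both lie in the double coset (since $z\in R(H)yR(H)$) and are distinct: if $R(H)z=R(H)y$ then $zy^{-1}=zy\in R(H)$, but $zy=(yz)^{-1}$ fixes $1\in H$ and $|yz|=6$ by Lemma~\ref{lem9}(c), so $zy\neq1$, whereas the only element of the regular group $R(H)$ fixing $1$ is the identity. It then remains to show the double coset contains no further right coset. The number of right cosets of $R(H)$ inside $R(H)yR(H)$ is $[R(H):R(H)\cap yR(H)y]=|H|/|T|$, where $T=\{h\in H\mid yR(h)y\in R(H)\}$ is a subgroup (using $y^{-1}=y$ from Lemma~\ref{lem6}). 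The heart of the matter is the identity $yR(k)y=R(k^\varphi)$ for all $k\in K$, which shows $K\leqslant T$. It is checked by evaluating $yR(k)y$ separately on $K$ and on $fK$: on $K$ it is immediate from $g^y=g^\varphi$ and $\varphi^2=1$, while on $fK$ the formula $(fk)^y=fcdk^\varphi e_m^m$ produces two factors $cd$ and two factors $e_m^m$ that cancel, the cancellation of the $e_m^m$ terms using the centrality of $E$ in $H$. Since $[H:K]=2$ and $T\neq H$ (otherwise $y$ normalizes $R(H)$ and $R(H)yR(H)=R(H)y$, contradicting $R(H)z\neq R(H)y$), we get $T=K$; hence the double coset is the union of exactly two right cosets, necessarily $R(H)y$ and $R(H)z$, giving $(\star)$ and thus (a). This identification $T=K$ is the step I expect to be the main obstacle, as it is the one place where the precise correction terms in the definition of $y$ on $fK$ and the centrality of $E$ must be combined just so.

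Part (b) is then immediate by right multiplying $(\star)$ by $x^{-1}$: since $(xy)^{-1}=yx^{-1}$ and $x^{-1}R(H)=R(H)x^{-1}$, one has $R(H)(xy)^{-1}R(H)=(R(H)yR(H))x^{-1}$, and using $y^{-1}=y$, $z^{-1}=z$ (Lemma~\ref{lem12}) this equals $R(H)(xy)^{-1}\sqcup R(H)(xz)^{-1}$. For part (c), observe that $R(H)\{xy,(xy)^{-1}\}R(H)$ is the union of the two double cosets treated in (a) and (b); since distinct double cosets are disjoint, it suffices to show $R(H)(xy)R(H)\neq R(H)(xy)^{-1}R(H)$, equivalently $(xy)^{-1}\notin R(H)xy\cup R(H)xz$. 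By (a) this amounts to $(xy)^2\neq1$ and $xzxy\neq1$ (both products fix $1$, hence lie in $R(H)$ only if they are trivial). Each is confirmed by evaluating the permutation at the single element $a$: using the definition of $x$ together with Lemmas~\ref{lem12} and~\ref{lem23} one computes $a^{(xy)^2}=a^3cd\cdots$ and $a^{xzxy}=a^3bcd\cdots$, both different from $a$. Once the two double cosets are disjoint, all four right cosets occurring in (a) and (b) are pairwise distinct, which is precisely (c). Apart from the computation $T=K$, every step here is routine: the transfers by $x$ and $x^{-1}$, the distinctness of $R(H)y$ and $R(H)z$ via regularity and $|yz|=6$, and the two nondegeneracy evaluations for (c).
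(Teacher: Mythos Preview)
Your argument is correct and broadly parallel to the paper's, but there is one genuine difference worth noting. The paper also computes that $R(H)\cap (xy)^{-1}R(H)(xy)=R(K)$, which is exactly your identity $T=K$ transported by $x$; however, to show this intersection is \emph{proper} the paper invokes Proposition~\ref{prop1} (connectivity of $\Gamma_m$), arguing that if $R(H)$ were normalized by $xy$ then $R(H)\trianglelefteq\langle R(H),xy\rangle=\Alt(H)$, a contradiction. You instead prove properness directly by exhibiting two distinct right cosets $R(H)y$ and $R(H)z$, using only that $zy$ fixes $1$ and has order $6$ (Lemma~\ref{lem9}(c)). This makes your proof of the lemma logically independent of the connectivity result, which is a cleaner dependency structure. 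For distinguishing the four cosets in part~(c), the paper simply evaluates $xy,(xy)^{-1},xz,(xz)^{-1}$ at $b$ and observes the four images $d,c^2,c^2d,cd$ are distinct; your route via $(xy)^2\neq1$ and $xzxy\neq1$ evaluated at $a$ is equally valid but slightly longer. In short: same core computation $T=K$, but your substitution of an elementary distinctness check for the appeal to connectivity is a small genuine improvement.
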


\begin{proof}
If $(xy)^{-1}R(H)xy\cap R(H)=R(H)$, then $R(H)$ is normal in $\langle R(H),xy\rangle$, which is impossible as Proposition~\ref{prop1} implies that $\langle R(H),xy\rangle=\Alt(H)$. Thus
\[
(xy)^{-1}R(H)xy\cap R(H)\neq R(H).
\]
Since $x\in\Aut(H)$ and $y|_K\in\Aut(K)$, we have $x^{-1}R(H)x=R(H)$ and $yR(K)y=R(K)$, which implies that
\begin{align*}
(xy)^{-1}R(H)xy\cap R(H)&=yx^{-1}R(H)xy\cap R(H)\\
&=yR(H)y\cap R(H)\geqslant yR(K)y\cap R(K)=R(K).
\end{align*}
As $R(K)$ has index $2$ in $R(H)$, we then deduce that $(xy)^{-1}R(H)xy\cap R(H)=R(K)$. In particular, $(xy)^{-1}R(H)xy$ has index $2$ in $R(H)$, whence
\[
\frac{|R(H)xyR(H)|}{|R(H)|}=\frac{|R(H)|}{|(xy)^{-1}R(H)xy\cap R(H)|}=2.
\]
Consequently,
\begin{equation}\label{eq20}
|R(H)(xy)^{-1}R(H)|=|R(H)xyR(H)|=2|R(H)|
\end{equation}
and thus
\begin{equation}\label{eq19}
|R(H)\{xy,(xy)^{-1}\}R(H)|\leqslant|R(H)xyR(H)|+|R(H)(xy)^{-1}R(H)|=4|R(H)|.
\end{equation}
Note from the definition of $z$ that
\[
xz\in xR(H)yR(H)=R(H)xyR(H).
\]
Hence $R(H)xz\subseteq R(H)xyR(H)$ and $R(H)(xz)^{-1}\subseteq R(H)(xy)^{-1}R(H)$. Moreover, from
\[
b^{xy}=d,\quad b^{(xy)^{-1}}=c^2,\quad b^{xz}=c^2d\quad\text{and}\quad b^{(xz)^{-1}}=cd
\]
we see that $xy$, $(xy)^{-1}$, $xz$ and $(xz)^{-1}$ are pairwise distinct. Then as
\[
xy,(xy)^{-1},xz,(xz)^{-1}\in\Alt(H)_1
\]
and $\Alt(H)_1$ forms a right transversal of $R(H)$ in $\Alt(H)$, it follows that $R(H)xy$, $R(H)(xy)^{-1}$, $R(H)xz$ and $R(H)(xz)^{-1}$ are pairwise disjoint. Therefore,
\[
R(H)xyR(H)\supseteq R(H)xy\sqcup R(H)xz,
\]
\[
R(H)(xy)^{-1}R(H)\supseteq R(H)(xy)^{-1}\sqcup R(H)(xz)^{-1}
\]
and
\[
R(H)\{xy,(xy)^{-1}\}R(H)\supseteq R(H)xy\sqcup R(H)(xy)^{-1}\sqcup R(H)xz\sqcup R(H)(xz)^{-1}.
\]
This combined with~\eqref{eq20} and~\eqref{eq19} yields the lemma.
\end{proof}

\begin{proposition}\label{prop2}
Let $S=\{xy,(xy)^{-1},xz,(xz)^{-1}\}$. Then $\Cay(\Alt(H)_1,S)$ is a tetravalent nonnormal Cayley graph isomorphic to $\Gamma_m$ by the mapping $g\mapsto R(H)g$.
\end{proposition}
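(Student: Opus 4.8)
The plan is to exhibit $\phi\colon g\mapsto R(H)g$ as an explicit graph isomorphism and then deduce nonnormality from the simplicity of $\Alt(H)$. First I would record that $\Alt(H)_1$ is a right transversal of $R(H)$ in $\Alt(H)$: since $R(H)$ is regular we have $R(H)\cap\Alt(H)_1=1$, while orbit--stabilizer gives $|\Alt(H)_1|=|\Alt(H)|/|H|=[\Alt(H){:}R(H)]$; hence each right coset $R(H)t$ meets $\Alt(H)_1$ in exactly one point, and $\phi$ is a bijection from the vertex set of $\Cay(\Alt(H)_1,S)$ onto $[\Alt(H){:}R(H)]$. Moreover $S=\{xy,(xy)^{-1},xz,(xz)^{-1}\}\subseteq\Alt(H)_1\setminus\{1\}$: all four elements fix $1$, none is trivial, and they are pairwise distinct by the computation of $b^{xy}$, $b^{(xy)^{-1}}$, $b^{xz}$, $b^{(xz)^{-1}}$ already carried out in the proof of Lemma~\ref{lem1}. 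Since $S$ is inverse-closed with $|S|=4$, the graph $\Cay(\Alt(H)_1,S)$ is a well-defined tetravalent Cayley graph.

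Next I would translate adjacency. In the coset graph $\Gamma_m=\Cos(\Alt(H),R(H),D)$ with $D=R(H)\{xy,(xy)^{-1}\}R(H)$, two vertices $R(H)t_1$ and $R(H)t_2$ are adjacent if and only if $t_2t_1^{-1}\in D$ (this is the standard coset-graph relation, and it is symmetric because $D$ is an inverse-closed union of double cosets). Taking $t_i=g_i\in\Alt(H)_1$, the element $g_2g_1^{-1}$ again lies in the subgroup $\Alt(H)_1$, so $\phi(g_1)\sim\phi(g_2)$ if and only if $g_2g_1^{-1}\in D\cap\Alt(H)_1$. The crucial identity is $D\cap\Alt(H)_1=S$: by Lemma~\ref{lem1}(c), $D=R(H)xy\sqcup R(H)(xy)^{-1}\sqcup R(H)xz\sqcup R(H)(xz)^{-1}$, and because each of $xy,(xy)^{-1},xz,(xz)^{-1}$ lies in the transversal $\Alt(H)_1$, each of these four cosets meets $\Alt(H)_1$ in exactly that single element. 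Thus $\phi(g_1)\sim\phi(g_2)$ if and only if $g_2g_1^{-1}\in S$, which is precisely adjacency in $\Cay(\Alt(H)_1,S)$, so $\phi$ is a graph isomorphism.

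Finally, for nonnormality I would use the embedding $\rho\colon\Alt(H)\to\Aut(\Gamma_m)$. The point is that $\rho(\Alt(H)_1)$ transports to the right regular representation of the Cayley graph: for $h\in\Alt(H)_1$ and any vertex $R(H)g$ with $g\in\Alt(H)_1$, the product $gh$ lies in $\Alt(H)_1$, so $\rho(h)$ sends $R(H)g$ to $R(H)gh$, i.e.\ under $\phi$ it acts as $g\mapsto gh$; hence $\rho(\Alt(H)_1)=R_{\Alt(H)_1}(\Alt(H)_1)$. Now suppose $\Cay(\Alt(H)_1,S)$ were normal. Then $R_{\Alt(H)_1}(\Alt(H)_1)=\rho(\Alt(H)_1)$ would be normal in $\Aut(\Gamma_m)$, hence normal in its subgroup $\rho(\Alt(H))$; as $\rho$ is injective this forces $\Alt(H)_1\trianglelefteq\Alt(H)$. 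But $\Alt(H)\cong\A_{2^{m+6}}$ is simple (since $2^{m+6}\geqslant128$) and $\Alt(H)_1\cong\A_{2^{m+6}-1}$ is a proper nontrivial subgroup, a contradiction. Therefore $\Cay(\Alt(H)_1,S)$ is nonnormal.

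The one place demanding genuine care is the bookkeeping for $D\cap\Alt(H)_1=S$ together with the correct orientation of the coset-graph adjacency relation; everything there reduces to Lemma~\ref{lem1}(c) and the transversal property. In the nonnormality argument the subtle step is the identification $\rho(\Alt(H)_1)=R_{\Alt(H)_1}(\Alt(H)_1)$ under $\phi$, which hinges on the closure $gh\in\Alt(H)_1$ for $h\in\Alt(H)_1$ (so that no $R(H)$-correction intervenes); this is exactly what lets the simplicity of $\Alt(H)$ deliver the conclusion.
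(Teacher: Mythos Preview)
Your proof is correct and follows essentially the same approach as the paper: both use that $\Alt(H)_1$ is a right transversal of $R(H)$ to make $\phi$ bijective, invoke Lemma~\ref{lem1}(c) to match adjacency in $\Gamma_m$ with membership in $S$, and then identify $\rho(\Alt(H)_1)$ (via $\phi$) with the right regular representation to deduce nonnormality. The only difference is cosmetic: the paper phrases the adjacency step as a chain of equivalences on $R(H)vu^{-1}$, while you package it as $D\cap\Alt(H)_1=S$; and the paper simply asserts that $\sigma\rho(\Alt(H)_1)\sigma^{-1}$ is not normal in $\sigma\rho(\Alt(H))\sigma^{-1}$, whereas you spell out that this reduces to the simplicity of $\Alt(H)\cong\A_{2^{m+6}}$.
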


\begin{proof}
From Lemma~\ref{lem1}(c) we see that $xy$, $(xy)^{-1}$, $xz$ and $(xz)^{-1}$ are pairwise distinct. Hence $|S|=4$ and so $\Cay(\Alt(H)_1,S)$ is tetravalent. Let $\sigma\colon g\mapsto R(H)g$ be the mapping from $\Alt(H)_1$ to the vertex set of $\Gamma_m$. Since $\Alt(H)_1$ forms a right transversal of $R(H)$ in $\Alt(H)$, the mapping $\sigma$ is bijective. Moreover, for $u$ and $v$ in $\Alt(H)_1$, $u$ is adjacent to $v$ in $\Cay(\Alt(H)_1,S)$ if and only if $vu^{-1}\in S$, which is equivalent to
\[
R(H)vu^{-1}\in\{R(H)xy,R(H)(xy)^{-1},R(H)xz,R(H)(xz)^{-1}\}.
\]
By Lemma~\ref{lem1}, this means that $u$ and $v$ is adjacent in $\Cay(\Alt(H)_1,S)$ if and only if
\[
R(H)vu^{-1}\subseteq R(H)\{xy,(xy)^{-1}\}R(H),
\]
or equivalently, $R(H)u$ is adjacent to $R(H)v$ in $\Gamma_m$. Therefore, $\sigma$ is a graph isomorphism from $\Cay(\Alt(H)_1,S)$ to $\Gamma_m$. It follows that
\[
\sigma\rho(\Alt(H)_1)\sigma^{-1}<\sigma\rho(\Alt(H))\sigma^{-1}\leqslant\Aut(\Cay(\Alt(H)_1,S)).
\]
Note that for each $g\in\Alt(H)_1$, the permutation $\sigma\rho(g)\sigma^{-1}$ of $\Alt(H)_1$ is precisely the right multiplication of $g$. Then since $\sigma\rho(\Alt(H)_1)\sigma^{-1}$ is not normal in $\sigma\rho(\Alt(H))\sigma^{-1}$, we conclude that $\Cay(\Alt(H)_1,S)$ is a nonnormal Cayley graph. This completes the proof.
\end{proof}

\begin{proposition}\label{prop3}
Let $S=\{xy,(xy)^{-1},xz,(xz)^{-1}\}$. Then $\Aut(\Alt(H)_1,S)=1$.
\end{proposition}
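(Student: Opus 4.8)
The plan is to show that the only automorphism $\alpha$ of $\Alt(H)_1\cong\A_{2^{m+6}-1}$ fixing $S=\{xy,(xy)^{-1},xz,(xz)^{-1}\}$ setwise is the identity. Since $\Alt(H)_1\cong\A_n$ with $n=2^{m+6}-1\geqslant 127\geqslant 7$, every automorphism of $\Alt(H)_1$ is inner (the outer automorphism exceptions $\A_6$ do not arise here), so $\alpha$ is conjugation by some $\beta\in\Sym(H)_1$ normalizing $\Alt(H)_1$; as $n\neq 6$, in fact $\beta\in\Sym(H)_1$ and $\alpha=\mathrm{conj}(\beta)$. Thus I must show that if $\beta\in\Sym(H)_1$ satisfies $S^\beta=S$ (conjugation), then $\beta$ centralizes $\langle xy,xz\rangle$; combined with the fact that $\langle xy,xz\rangle$ acts with the orbit structure forced by the connectivity analysis, this will pin down $\beta=1$ on $H$.

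First I would exploit that $\alpha$ permutes $S$ while respecting inverses: since $S$ is inverse-closed and consists of two inverse-pairs $\{xy,(xy)^{-1}\}$ and $\{xz,(xz)^{-1}\}$, and since $|xy|$ and $|xz|$ are determined by the element orders (note $xy$ and $xz$ are conjugate in $\Alt(H)$ but I must check whether they are conjugate by an element normalizing $\Alt(H)_1$), $\alpha$ either fixes each inverse-pair setwise or swaps the two pairs. I would compute, using Lemmas~\ref{lem6}, \ref{lem12} and the explicit action data, the cycle types of $xy$ and $xz$ on $H$ (equivalently on the $n$ nonidentity points, since $1$ is fixed); if these cycle types differ, then $\alpha$ cannot swap the pairs and must fix each pair. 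The key auxiliary quantity is the fixed-point data already packaged in Lemma~\ref{lem9} and Lemma~\ref{lem16}, which distinguishes powers of $xy$ from powers of $xz$ via their fixed-point counts.

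The decisive step is then to rule out the two surviving possibilities for how $\alpha$ acts within the pairs, namely (a) $\alpha$ fixes $xy$ and $xz$, or (b) $\alpha$ sends $xy\mapsto(xy)^{-1}$ and $xz\mapsto(xz)^{-1}$ (an inversion map). In case (a), $\beta$ centralizes both $xy$ and $xz$, hence centralizes $\langle xy,xz\rangle$; since $\langle xy,xz\rangle$ together with $R(H)$ generates $\Alt(H)$ (Proposition~\ref{prop1}), and the orbit structure of $\langle xy,xz\rangle$ on $H\setminus\{1\}$ established in the proof of Proposition~\ref{prop1} is rich enough to have trivial centralizer in $\Sym(H)_1$, I can conclude $\beta=1$. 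Case (b) is the genuine obstacle: I must show no permutation $\beta$ of $H$ simultaneously inverts $xy$ and $xz$ by conjugation. Here I would use Lemma~\ref{lem21}, which asserts that the set $\{x(yz)^ix^{-1},x(yz)^iy\mid i\in\mathbb{Z}\}\cap\{(yz)^j,(yz)^jyx^{-1}\mid j\in\mathbb{Z}\}=\{1\}$; this separation of the two kinds of words is precisely the algebraic incompatibility that forbids a single $\beta$ from inverting both generators, because an inverting $\beta$ would force $\langle xy,xz\rangle$ to admit an anti-automorphism matching a symmetry its word structure does not possess.

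The hard part will be case (b): establishing that no simultaneous inversion exists. I expect to handle it by assuming such $\beta$ exists and deriving that $\beta$ must normalize $\langle xy\rangle$ and $\langle xz\rangle$ and invert each, hence conjugate $(xy)(xz)^{-1}$ and related products to their inverses in a way that contradicts the intersection computation of Lemma~\ref{lem21} (and the order computation $|yz|=6$ from Lemma~\ref{lem9}(c), which controls the dihedral-versus-cyclic structure of $\langle xy,xz\rangle$). The fixed-point intersection data in Lemma~\ref{lem2}, broken into residues of $m$ modulo $4$, is the likely tool for distinguishing $\langle xy,xz\rangle$ from its ``reverse'' and thereby closing case (b) uniformly in $m$.
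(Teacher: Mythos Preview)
Your outline has the right opening move (every automorphism of $\Alt(H)_1\cong\A_{n}$ with $n\geqslant7$ is conjugation by some $\sigma\in\Sym(H)_1$, and if $\sigma$ centralizes $xy$ and $xz$ then it centralizes $\langle xy,xz\rangle=\Alt(H)_1$, forcing $\sigma=1$), but the subsequent case analysis has two genuine gaps.

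First, the hope that $xy$ and $xz$ have different cycle types, allowing you to dismiss the pair-swapping cases, does not hold. Both $xy$ and $xz$ fix $1$ and lie in the same $R(H)$--double coset (Lemma~\ref{lem1}(a)); the paper has to treat the case $\sigma^{-1}(xy)\sigma=xz$ with a substantive argument, and the tool is not cycle type but the finer fixed-point \emph{intersection} invariants of Lemma~\ref{lem2}(d)--(g), which separate $m\bmod 4$. Your proposal mentions Lemma~\ref{lem2} only in passing at the end; in fact it is the workhorse for half of the cases.

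Second, your reduction to just cases~(a) and~(b) is incomplete even assuming each inverse-pair is fixed setwise. There are four inverse-respecting permutations that fix $\{xy,(xy)^{-1}\}$ and $\{xz,(xz)^{-1}\}$ setwise: fix both, invert both, fix $xy$ and invert $xz$, invert $xy$ and fix $xz$. You omit the two mixed cases. The paper rules these out by observing that such a $\sigma$ would force $|\Fix(yz)|=|\Fix(xyxz)|$ and $|\Fix((yz)^2)|=|\Fix((xyxz)^2)|$ simultaneously, contradicting Lemmas~\ref{lem9}(d)(e) and~\ref{lem16}. For the simultaneous-inversion case~(b), the paper does \emph{not} use Lemma~\ref{lem21} (that lemma is about the alternating-cycle structure in Proposition~\ref{prop5}); instead it compares $|\Fix(yz)^{x^{-1}}\cap\Fix(yz)^{x^{-1}yz}|$ with $|\Fix(yz)\cap\Fix(yz)^{xyzx^{-1}}|$ via Lemma~\ref{lem2}(a)(b), which a simultaneous inversion would equate. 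Your appeal to Lemma~\ref{lem21} as an ``algebraic incompatibility'' forbidding an anti-automorphism is not an argument: that lemma records a disjointness of two $12$-element subsets of $\Alt(H)_1$ and says nothing about conjugation by an inverting $\sigma$.

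In short, the paper's proof is a direct four-case analysis on the image of $xy$ under $\sigma$, each case killed by a tailored numerical invariant from Lemmas~\ref{lem9}, \ref{lem16}, and~\ref{lem2}; your sketch identifies the right framework but misidentifies which lemmas do the work and undercounts the cases.
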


\begin{proof}
Since $\Gamma_m$ is connected as Proposition~\ref{prop1} states, we derive from Proposition~\ref{prop2} that $\Cay(\Alt(H)_1,S)$ is connected, which means $\langle xy,xz\rangle=\Alt(H)_1$. Suppose for a contradiction that $\Aut(\Alt(H)_1,S)\neq1$. Then there exists $1\neq\sigma\in\Sym(H)$ such that $1^\sigma=1$ and
\[
\{\sigma^{-1}xy\sigma,\sigma^{-1}(xy)^{-1}\sigma,\sigma^{-1}xz\sigma,\sigma^{-1}(xz)^{-1}\sigma\}=\{xy,(xy)^{-1},xz,(xz)^{-1}\}.
\]
In particular, $\sigma^{-1}xy\sigma\in\{xy,(xy)^{-1},xz,(xz)^{-1}\}$.

\underline{Case~1.} Assume $\sigma^{-1}xy\sigma=xy$. Then $\sigma^{-1}(xy)^{-1}\sigma=(xy)^{-1}$, and so $\sigma^{-1}xz\sigma=xz$ or $(xz)^{-1}$. If $\sigma^{-1}xz\sigma=xz$, then since $\langle xy,xz\rangle=\Alt(H)_1$, we conclude that $\sigma$ centralizes every element of $\Alt(H)_1$, contradicting $\sigma\neq1$. Thus $\sigma^{-1}xz\sigma=(xz)^{-1}$. It follows that
\[
|\Fix(yz)|=|\Fix(xy(xz)^{-1})|=|\Fix((\sigma^{-1}xy\sigma)(\sigma^{-1}xz\sigma))|=|\Fix(xyxz)|
\]
and
\[
|\Fix((yz)^2)|=|\Fix((xy(xz)^{-1})^2)|=|\Fix(((\sigma^{-1}xy\sigma)(\sigma^{-1}xz\sigma))^2)|=|\Fix((xyxz)^2)|.
\]
However, according to Lemmas~\ref{lem9} and~\ref{lem16}, these equations cannot hold simultaneously, a contradiction.

\underline{Case~2.} Assume $\sigma^{-1}xy\sigma=(xy)^{-1}$. Then $\sigma^{-1}(xy)^{-1}\sigma=xy$, and so $\sigma^{-1}xz\sigma=xz$ or $(xz)^{-1}$. If $\sigma^{-1}xz\sigma=xz$, then
\[
|\Fix(yz)|=|\Fix((xy)^{-1}xz)|=|\Fix((\sigma^{-1}xy\sigma)(\sigma^{-1}xz\sigma))|=|\Fix(xyxz)|
\]
and
\[
|\Fix((yz)^2)|=|\Fix(((xy)^{-1}xz)^2)|=|\Fix(((\sigma^{-1}xy\sigma)(\sigma^{-1}xz\sigma))^2)|=|\Fix((xyxz)^2)|,
\]
which cannot hold simultaneously by Lemmas~\ref{lem9} and~\ref{lem16}, a contradiction. Therefore, $\sigma^{-1}xz\sigma=(xz)^{-1}$, and so $\sigma^{-1}(xz)^{-1}\sigma=xz$. However, this implies that
\begin{align*}
&\left|\Fix(yz)^{x^{-1}}\cap\Fix(yz)^{x^{-1}yz}\right|\\
=&\left|\Fix(xy(xz)^{-1})\cap\Fix(xy(xz)^{-1})^{(xy)^{-1}xz}\right|\\
=&\left|\Fix((\sigma^{-1}(xy)^{-1}\sigma)(\sigma^{-1}xz\sigma))\cap
\Fix((\sigma^{-1}(xy)^{-1}\sigma)(\sigma^{-1}xz\sigma))^{(\sigma^{-1}xy\sigma)(\sigma^{-1}(xz)^{-1}\sigma)}\right|\\
=&\left|\Fix((xy)^{-1}xz)\cap\Fix((xy)^{-1}xz)^{xy(xz)^{-1}}\right|\\
=&\left|\Fix(yz)\cap\Fix(yz)^{xyzx^{-1}}\right|,
\end{align*}
contradicting Lemma~\ref{lem2}(a)(b).

\underline{Case~3.} Assume $\sigma^{-1}xy\sigma=xz$. Then $\sigma^{-1}(xy)^{-1}\sigma=(xz)^{-1}$, and so $\sigma^{-1}xz\sigma=xy$ or $(xy)^{-1}$. If $\sigma^{-1}xz\sigma=(xy)^{-1}$, then
\[
|\Fix(yz)|=|\Fix(xz(xy)^{-1})|=|\Fix((\sigma^{-1}xy\sigma)(\sigma^{-1}xz\sigma))|=|\Fix(xyxz)|
\]
and
\[
|\Fix((yz)^2)|=|\Fix((xz(xy)^{-1})^2)=|\Fix(((\sigma^{-1}xy\sigma)(\sigma^{-1}xz\sigma))^2)|=|\Fix((xyxz)^2)|,
\]
which cannot hold simultaneously by Lemmas~\ref{lem9} and~\ref{lem16}, a contradiction. Therefore, $\sigma^{-1}xz\sigma=xy$, which leads to
\begin{align*}
&\left|\Fix(yz)^{xz}\cap\Fix(xyxz)^{(xy)^{-1}(xz)^{-1}}\right|\\
=&\left|\Fix((xy)^{-1}xz)^{xz}\cap\Fix(xyxz)^{(xy)^{-1}(xz)^{-1}}\right|\\
=&\left|\Fix((\sigma^{-1}xz\sigma)^{-1}(\sigma^{-1}xy\sigma))^{\sigma^{-1}xy\sigma}
\cap\Fix((\sigma^{-1}xz\sigma)(\sigma^{-1}xy\sigma))^{(\sigma^{-1}xz\sigma)^{-1}(\sigma^{-1}xy\sigma)^{-1}}\right|\\
=&\left|\Fix((xz)^{-1}xy)^{xy}\cap\Fix(xzxy)^{(xz)^{-1}(xy)^{-1}}\right|\\
=&\left|\Fix(zy)^{xy}\cap\Fix(xzxy)^{(xz)^{-1}(xy)^{-1}}\right|
\end{align*}
and
\begin{align*}
&\left|\Fix(yz)\cap\Fix((xyxz)^2)^{(xy)^2(xz)^2}\right|\\
=&\left|\Fix((xy)^{-1}xz)\cap\Fix((xyxz)^2)^{(xy)^2(xz)^2}\right|\\
=&\left|\Fix((\sigma^{-1}xz\sigma)^{-1}(\sigma^{-1}xy\sigma))
\cap\Fix(((\sigma^{-1}xz\sigma)(\sigma^{-1}xy\sigma))^2)^{(\sigma^{-1}xz\sigma)^2(\sigma^{-1}xy\sigma)^2}\right|\\
=&\left|\Fix((xz)^{-1}xy)\cap\Fix((xzxy)^2)^{(xz)^2(xy)^2}\right|\\
=&\left|\Fix(zy)\cap\Fix((xzxy)^2)^{(xz)^2(xy)^2}\right|,
\end{align*}
contradicting Lemma~\ref{lem2}(d)--(g).

\underline{Case~4.} Assume $\sigma^{-1}xy\sigma=(xz)^{-1}$. Then $\sigma^{-1}(xy)^{-1}\sigma=xz$, and so $\sigma^{-1}xz\sigma=xy$ or $(xy)^{-1}$. If $\sigma^{-1}xz\sigma=xy$, then
\[
|\Fix(yz)|=|\Fix((xz)^{-1}xy)|=|\Fix((\sigma^{-1}xy\sigma)(\sigma^{-1}xz\sigma))|=|\Fix(xyxz)|
\]
and
\[
|\Fix((yz)^2)|=|\Fix(((xz)^{-1}xy)^2)|=|\Fix(((\sigma^{-1}xy\sigma)(\sigma^{-1}xz\sigma))^2)|=|\Fix((xyxz)^2)|,
\]
which cannot hold simultaneously by Lemmas~\ref{lem9} and~\ref{lem16}, a contradiction. Thus $\sigma^{-1}xz\sigma=(xy)^{-1}$, and so $\sigma^{-1}(xz)^{-1}\sigma=xy$. Since $(yz)^{-1}=zy$, we have $\Fix(yz)=\Fix(zy)$. It then follows that
\begin{align*}
&\left|\Fix(yz)^{x^{-1}}\cap\Fix(yz)^{x^{-1}zy}\right|\\
=&\left|\Fix(zy)^{x^{-1}}\cap\Fix(zy)^{x^{-1}zy}\right|\\
=&\left|\Fix(xz(xy)^{-1})\cap\Fix(xz(xy)^{-1})^{(xz)^{-1}xy}\right|\\
=&\left|\Fix((\sigma^{-1}(xy)^{-1}\sigma)(\sigma^{-1}xz\sigma))\cap
\Fix((\sigma^{-1}(xy)^{-1}\sigma)(\sigma^{-1}xz\sigma))^{(\sigma^{-1}xy\sigma)(\sigma^{-1}(xz)^{-1}\sigma)}\right|\\
=&\left|\Fix((xy)^{-1}xz)\cap\Fix((xy)^{-1}xz)^{xy(xz)^{-1}}\right|\\
=&\left|\Fix(yz)\cap\Fix(yz)^{xyzx^{-1}}\right|,
\end{align*}
contradicting Lemma~\ref{lem2}(a)(c). This completes the proof.
\end{proof}

\section{$\Gamma_m$ as a half-arc-transitive graph}\label{sec5}

In this section we determine $\Aut(\Gamma_m)$ and show that $\Gamma_m$ is a loosely attached tetravalent half-arc-transitive graph of radius $6$, and then prove the $(m+6)$-arc-transitivity of $\calD_1(\Gamma)$ and $\calD_2(\Gamma)$. This together with results from previous sections finally leads to a proof of Theorem~\ref{thm1} at the end of the section.

\begin{proposition}\label{prop4}
The automorphism group of $\Gamma_m$ is $\rho(\Alt(H))$, whose stabilizer of the vertex $R(H)$ has $\{R(H)xy,R(H)xz\}$ and $\{R(H)(xy)^{-1},R(H)(xz)^{-1}\}$ as the orbits on the neighborhood of $R(H)$ in $\Gamma_m$.
\end{proposition}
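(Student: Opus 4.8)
The plan is to read the orbit assertion off Lemma~\ref{lem1}, and then to prove $\Aut(\Gamma_m)=\rho(\Alt(H))$ by exploiting that the regular subgroup $\rho(\Alt(H)_1)$ is self-normalizing in $\Aut(\Gamma_m)$. Throughout write $A=\Aut(\Gamma_m)$, $N=\rho(\Alt(H))$, $L=\rho(\Alt(H)_1)$, $v=R(H)$ and $k=2^{m+6}$, so that $N\cong\A_k$, $L\cong\A_{k-1}$ and $N_v=\rho(R(H))$ has order $k$. First I would settle the easy half. By Lemma~\ref{lem1}(a) the double coset $R(H)xyR(H)$ equals $R(H)xy\sqcup R(H)xz$, so $N_v=\rho(R(H))$ preserves $\{R(H)xy,R(H)xz\}$; moreover the identity $(xy)^{-1}R(H)xy\cap R(H)=R(K)$ from the proof of Lemma~\ref{lem1} shows that the $\rho(R(H))$-stabilizer of $R(H)xy$ is $\rho(R(K))$, of index $2$, so $\{R(H)xy,R(H)xz\}$ is a single $N_v$-orbit; by Lemma~\ref{lem1}(b) the same holds for $\{R(H)(xy)^{-1},R(H)(xz)^{-1}\}$. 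These are therefore exactly the two $N_v$-orbits on the neighbourhood of $v$; they orient the edges, and $N$ acts transitively on the arcs of the resulting digraph $\calD_1(\Gamma_m)$ (hence edge-transitively on $\Gamma_m$) but has two orbits on the arcs of $\Gamma_m$, so $N$ is half-arc-transitive. Once $A=N$ is proved, the displayed orbits are exactly those of the full vertex stabilizer.

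It remains to prove $A=N$; since $N$ is vertex-transitive this is equivalent to $A_v=N_v$. Under the isomorphism $\sigma$ of Proposition~\ref{prop2}, $L$ is the right regular representation of $\Alt(H)_1$, hence regular on the vertex set, and Godsil's identity~\eqref{eq35} together with $\Aut(\Alt(H)_1,S)=1$ from Proposition~\ref{prop3} yields $\Nor_A(L)=L$. From this I extract two facts. First, $\Cen_A(N)=1$: any $c\in\Cen_A(N)$ centralizes $L\leqslant N$, so $c\in\Nor_A(L)=L\leqslant N$ and hence $c\in\Cen_N(N)=\Z(N)=1$. Second, $\Nor_A(N)=N$: writing $W=\Nor_A(N)$ we have $\Cen_W(N)\leqslant\Cen_A(N)=1$, so $W$ embeds into $\Aut(\A_k)=\Sym_k$ with $N$ mapping onto $\Inn(N)$, whence $W/N\hookrightarrow\Out(\A_k)=\C_2$; if $W\neq N$ then $W\cong\Sym_k$ and $L\cong\A_{k-1}$ is a point stabilizer of $N\cong\A_k$, so $\Nor_W(L)\cong\Nor_{\Sym_k}(\A_{k-1})=\Sym_{k-1}\supsetneq L$, contradicting $\Nor_A(L)=L$. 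Thus $\Nor_A(N)=N$.

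The remaining, and genuinely delicate, step is to upgrade $\Nor_A(N)=N$ to $N\trianglelefteq A$, for then $A=\Nor_A(N)=N$ as required. Since $\Cen_A(N)=1$, a minimal normal subgroup $M_0$ of $A$ cannot satisfy $M_0\cap N=1$ (else $[M_0,N]\leqslant M_0\cap N=1$ forces $M_0\leqslant\Cen_A(N)=1$), so $M_0\cap N\trianglelefteq N$ is nontrivial and hence equals $N$; thus $A$ has a unique minimal normal subgroup $M=\Soc(A)\supseteq N$, characteristically simple with $\Cen_M(N)=1$, and the task is to exclude $M\neq N$, i.e.\ a larger simple socle or $N$ embedded in a nontrivial direct power. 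This cannot be done by local analysis alone, since the vertex stabilizers are unbounded; instead one argues as in the proof of Proposition~\ref{prop1}, using that $M$ is transitive on the vertex set with the $2$-group $N_v$ inside $M_v$, together with the CFSG-based fixed-point-ratio and prime-power-index input invoked there and the constraint that a connected tetravalent edge-transitive nonnormal Cayley graph on a nonabelian simple group forces $\Soc(A)$ into the list of~\cite[Theorem~1.1(2)]{FLX2004}. This forces $M=N$, giving $A=N=\rho(\Alt(H))$, and the orbit description then follows from the first paragraph. I expect this socle determination to be the main obstacle, every other step being a formal consequence of the self-normalizing property $\Nor_A(L)=L$.
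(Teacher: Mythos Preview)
Your orbit computation and the deductions $\Cen_A(N)=1$ and $\Nor_A(N)=N$ are correct, but the argument breaks at the next step. The inclusion $[M_0,N]\leqslant M_0\cap N$ requires \emph{both} $M_0$ and $N$ to be normal in $A$; since only $M_0\trianglelefteq A$ is known, you get merely $[M_0,N]\leqslant M_0$. In fact the conclusion $\Soc(A)\supseteq N$ does not follow from $\Cen_A(N)=1$ and $\Nor_A(N)=N$ alone: if $A=V\rtimes N$ with $V$ a nontrivial faithful irreducible $\mathbb{F}_pN$-module, then one checks $\Cen_A(N)=1$ and $\Nor_A(N)=N$, yet $\Soc(A)=V$ meets $N$ trivially. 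So your reduction to ``exclude $M\neq N$'' is not established, and the sketch that follows is not a proof either. The appeal to \cite[Theorem~1.1(2)]{FLX2004} is also misdirected: that result constrains the simple group on which the Cayley graph lives, not $\Soc(A)$.

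The paper proceeds differently and concretely. It first shows that $A_v$ is a $2$-group: if $3\mid|\overline{A_v}|$ then the local action on the four neighbours is $2$-transitive, and Gardiner's bound (see \cite[Lemma~2.3]{FLX2004}) gives $|A_v|\mid 2^43^6$, impossible since $A_v\geqslant\rho(R(H))$ has order $2^{m+6}$ with $m+6\geqslant7>4$. With $|A|/|L|=|A_v|$ a $2$-power and $\Nor_A(L)=L$, it then applies \cite[Theorem~1.1]{FPW2002} to obtain a dichotomy: either (i) $\Soc(A)$ is nonabelian simple containing $L$ with $2$-power index, whence Guralnick's theorem \cite{Guralnick1983} forces $\Soc(A)\cong\A_k$, and $\Nor_A(L)=L$ rules out $A\cong\Sy_k$; or (ii) $A$ has a nontrivial intransitive normal subgroup $N_0$, necessarily a $2$-group, which is shown to be semiregular via \cite{GP1994,AAMPS2016}, forcing $\log_2|N_0|<2^{m+6}-2$, while the faithful conjugation action of $\Alt(H)$ on $N_0/\Phi(N_0)$ together with \cite[Proposition~5.3.7]{KL1990} forces $\log_2|N_0|\geqslant2^{m+6}-2$. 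The step you flag as ``the main obstacle'' is precisely this structured case split, and the key initial input---that $A_v$ is a $2$-group---is absent from your outline; the self-normalizing property of $L$ alone does not bypass it.
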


\begin{proof}
According to Proposition~\ref{prop1}, $\Gamma_m$ is connected. Let $X=\Aut(\Gamma_m)$, let $Y$ be the stabilizer in $X$ of the vertex $R(H)$, and let $G=\rho(\Alt(H)_1)$. Then $G\cong\A_{2^{m+6}-1}$ and $G$ is regular on the vertex set $[\Alt(H){:}R(H)]$ of $\Gamma_m$. In particular, $|G|=|X|/|Y|$. From Lemma~\ref{lem1} we see that
\[
R(H)xyR(H)=R(H)xzR(H)=R(H)xy\sqcup R(H)xz
\]
and
\[
R(H)(xy)^{-1}R(H)=R(H)(xz)^{-1}R(H)=R(H)(xy)^{-1}\sqcup R(H)(xz)^{-1},
\]
which implies that $\rho(R(H))$ has $\{R(H)xy,R(H)xz\}$ and $\{R(H)(xy)^{-1},R(H)(xz)^{-1}\}$ as the orbits on the neighborhood of $R(H)$ in $\Gamma_m$. As a consequence, $X\geqslant\rho(\Alt(H))$ is edge-transitive, and the induced permutation group $\overline{Y}$ of $Y$ on the neighborhood of $R(H)$ either is transitive or has two orbits of size $2$. If $|\overline{Y}|$ is divisible by $3$, then $\overline{Y}$ is $2$-transitive and it follows from a well-known result of Gardiner (see for instance~\cite[Lemma~2.3]{FLX2004}) that $|Y|$ divides $2^43^6$, contradicting $Y\geqslant\rho(R(H))\cong\D_8^2\times\C_2^m$. Thus $\overline{Y}$ is a $2$-group and so $Y$ is a $2$-group. Accordingly, $|X|/|G|=|Y|$ is a power of $2$. By Propositions~\ref{prop2} and~\ref{prop3}, $\Gamma_m$ is isomorphic to a Cayley graph on $\Alt(H)_1$ with connection set $S$ such that $\Aut(\Alt(H)_1,S)=1$, and the right multiplication action of $\Alt(H)_1$ on this Cayley graph is equivalent to that of $G=\rho(\Alt(H)_1)$ on $\Gamma_m$. Then we conclude from~\eqref{eq35} that $\Nor_X(G)=G$. Note that every nontrivial $G$-conjugacy class has size greater than $4$, the valency of $\Gamma_m$. We derive from~\cite[Theorem~1.1]{FPW2002} that one of the following two cases occurs:
\begin{enumerate}[(i)]
\item $\Soc(X)$ is a nonabelian simple group containing $G$ as a proper subgroup;
\item $X$ has a nontrivial normal subgroup $N$ intransitive on the vertex set of $\Gamma_m$ such that $\Soc(X/N)$ is a nonabelian simple group containing $GN/N\cong G$ and is transitive on the vertex set of the quotient graph of $\Gamma_m$ with respect to $N$.
\end{enumerate}
In what follows we will identify $\rho(\Alt(H))$ with $\Alt(H)$ by abuse of notation.

First assume that case~(i) occurs. Since $|\Soc(X)|/|G|$ divides $|X|/|G|$ and $|X|/|G|$ is a power of $2$, we see that $|\Soc(X)|/|G|$ is a power of $2$. Then it follows from~\cite[Theorem~1]{Guralnick1983} that $\Soc(X)=\A_{2^{m+6}}$, whence $X\cong\A_{2^{m+6}}$ or $\Sy_{2^{m+6}}$. If $X\cong\Sy_{2^{m+6}}$, then $\Nor_X(G)\cong\Sy_{2^{m+6}-1}$, contrary to the conclusion that $\Nor_X(G)=G$. Therefore, $X\cong\A_{2^{m+6}}$. As $X\geqslant\Alt(H)\cong\A_{2^{m+6}}$, this leads to $X=\Alt(H)$, and so $Y=\rho(R(H))$. Hence $Y$ has $\{R(H)xy,R(H)xz\}$ and $\{R(H)(xy)^{-1},R(H)(xz)^{-1}\}$ as orbits on the neighborhood of $R(H)$ in $\Gamma_m$, as $\rho(R(H))$ does.

Next assume that case~(ii) occurs. Then $G\cap N=1$, and $|N|=|GN|/|G|$ divides $|X|/|G|$, which shows that $N$ is a $2$-group. Consider the action $\theta$ of $\Alt(H)$ on $N$ by conjugation. Since $\Alt(H)$ is simple, either $\ker(\theta)=1$ or $\ker(\theta)=\Alt(H)$. If $\ker(\theta)=\Alt(H)$, then $\Alt(H)$ centralizes $N$ and hence $N\leqslant\Nor_X(G)=G$, contradicting $G\cap N=1$. Thus $\ker(\theta)=1$, and so $\Alt(H)\cong\A_{2^{m+6}}$ is isomorphic to an irreducible subgroup of $\Aut(N/\Phi(N))\cong\SL_n(2)$ for some positive integer $n$ with $2^n\leqslant|N|$, where $\Phi(N)$ is the Frattini subgroup of $N$. Then by~\cite[Proposition~5.3.7]{KL1990} we have $n\geqslant2^{m+6}-2$, which implies that $\log_2|N|\geqslant2^{m+6}-2$. Since $\Gamma_m$ is isomorphic to a Cayley graph on the simple group $\A_{2^{m+6}-1}$ and $X$ is edge-transitive, it follows that $\Gamma_m$ is non-bipartite and $X$ is either arc-transitive or half-arc-transitive. Then we conclude from~\cite[Theorem~1.1]{GP1994} and~\cite[Theorem~1.1]{AAMPS2016} that $N$ is semiregular on the vertex set of $\Gamma_m$, for otherwise the quotient graph of $\Gamma_m$ with respect to $N$ would have valency at most $2$ and could not admit a vertex-transitive action of the nonabelian simple group $\Soc(X/N)$. As a consequence, $|N|$ divides $|\Alt(H)|/|R(H)|=(2^{m+6}-1)!/2$ and hence
\[
\log_2|N|\leqslant\log_2\frac{(2^{m+6}-1)!}{2}=\sum_{i=1}^\infty\left\lfloor\frac{2^{m+6}-1}{2^i}\right\rfloor-1<\sum_{i=1}^\infty\frac{2^{m+6}-1}{2^i}-1=2^{m+6}-2.
\]
This contradicts the conclusion $\log_2|N|\geqslant2^{m+6}-2$, completing the proof.
\end{proof}

\begin{proposition}\label{prop5}
$\Gamma_m$ is a loosely attached tetravalent half-arc-transitive graph of radius~$6$, and $\Aut(\Gamma_m)$ has vertex stabilizer isomorphic to $\D_8^2\times\C_2^m$.
\end{proposition}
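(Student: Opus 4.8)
The plan is to combine the structural information already established in Proposition~\ref{prop4} with an explicit analysis of the local action of the vertex stabilizer, building on the alternating-cycle machinery introduced in the introduction. By Proposition~\ref{prop4} we know $\Aut(\Gamma_m)=\rho(\Alt(H))\cong\A_{2^{m+6}}$, and that the stabilizer $Y$ of the vertex $R(H)$ is $\rho(R(H))\cong\D_8^2\times\C_2^m$, which has exactly two orbits $\{R(H)xy,R(H)xz\}$ and $\{R(H)(xy)^{-1},R(H)(xz)^{-1}\}$ on the four neighbours. This immediately gives the claimed vertex stabilizer and shows $\Gamma_m$ is tetravalent. Since $\Aut(\Gamma_m)$ is vertex-transitive (being regular on $[\Alt(H){:}R(H)]$ via the copy $\rho(\Alt(H)_1)$) and edge-transitive (shown in Proposition~\ref{prop4}) but has two orbits on the arc set rather than one, it is half-arc-transitive; the two arc-orbits $\calO_1$ and $\calO_2$ are precisely those determined by the two neighbour-orbits above.

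First I would pin down the two orientations $\calD_1(\Gamma_m)$ and $\calD_2(\Gamma_m)$ concretely in terms of the right cosets, so that an alternating cycle becomes a sequence of cosets obtained by alternately multiplying on the left by elements of $R(H)\{xy,(xy)^{-1}\}$ in a way that switches orientation at each step. The key computation is to identify, starting from the base vertex $R(H)$, the shortest alternating cycle and its length. Concretely, following consecutive edges with alternating orientation amounts to tracking the permutation obtained as an alternating word in $xy$, $(xy)^{-1}$, $xz$, $(xz)^{-1}$, and the relevant closing relation is governed by the element $yz$ (equivalently its inverse $zy$). Since Lemma~\ref{lem9}(c) gives $|yz|=6$, I expect the alternating cycle through the base vertex to have length $12$, so that the radius — half the alternating cycle length — equals $6$, matching the statement.

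To establish loose attachment (attachment number $1$) I would show that two distinct alternating cycles meeting the base vertex share only that vertex. In coset terms this reduces to showing that the relevant subgroups and their translates intersect trivially along the cycle; the precise fixed-point and intersection data recorded in Lemma~\ref{lem9}(d)--(f) and Lemma~\ref{lem2} are exactly the inputs needed to rule out any shared vertex beyond the one forced by passing through the base point. The heart of the argument is Lemma~\ref{lem21}, which asserts that the only common element between the alternating words of the two types is $1$; this is precisely the statement that two alternating cycles emanating from $R(H)$ cannot re-meet except at $R(H)$ itself, giving attachment number $1$.

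The main obstacle will be the bookkeeping involved in translating the abstract coset-graph description of alternating cycles into explicit words in $x,y,z$ and then reading off the closure length and the non-re-intersection from the fixed-point lemmas; one must be careful that the alternation of orientations corresponds correctly to the alternation between the orbit $\{xy,xz\}$ and its inverse, and that the two candidate alternating cycles indexed by the two neighbour-orbits are handled symmetrically. Once the dictionary is set up correctly, loose attachment and radius $6$ follow formally from $|yz|=6$ and Lemma~\ref{lem21}, while the half-arc-transitivity and the vertex-stabilizer isomorphism are immediate from Proposition~\ref{prop4}. I would conclude by noting that the two arc-orbits genuinely differ — so that $\Gamma_m$ is half-arc-transitive and not arc-transitive — which is guaranteed by the asymmetry already encoded in Proposition~\ref{prop4}, where no automorphism interchanges the two neighbour-orbits (this is ultimately the content of $\Aut(\Alt(H)_1,S)=1$ from Proposition~\ref{prop3}).
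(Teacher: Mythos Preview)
Your proposal is correct and follows essentially the same approach as the paper: deduce half-arc-transitivity and the vertex stabilizer from Proposition~\ref{prop4}, pass to the Cayley-graph model, read off the alternating-cycle length from $|yz|=6$ via $(xy(xz)^{-1})^i=x(yz)^ix^{-1}$ (Lemma~\ref{lem9}(c)), and obtain loose attachment from Lemma~\ref{lem21}. One small correction: Lemmas~\ref{lem9}(d)--(f) and~\ref{lem2} are not actually needed here---Lemma~\ref{lem21} alone suffices for the attachment-number computation, since it directly identifies the intersection of the two alternating cycles through the base vertex as $\{1\}$.
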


\begin{proof}
As $\Gamma_m=\Cos(\Alt(H),R(H),R(H)\{xy,(xy)^{-1}\}R(H))$, we deduce from Proposition~\ref{prop4} that $\Gamma_m$ is a tetravalent half-arc-transitive graph and $\Aut(\Gamma_m)$ has vertex stabilizer $\rho(R(H))$. Consequently, $\Aut(\Gamma_m)$ has vertex stabilizer isomorphic to $H\cong\D_8^2\times\C_2^m$. Let $\Sigma_m=\Cay(\Alt(H)_1,\{xy,(xy)^{-1},xz,(xz)^{-1}\}$ and let $G$ be the automorphism group of $\Sigma_m$. Combining Proposition~\ref{prop2} and Proposition~\ref{prop4} we derive that $\Gamma_m\cong\Sigma_m$, and $\{xy,xz\}$ and $\{(xy)^{-1},(xz)^{-1}\}$ are the orbits of $G_1$ on the neighborhood of $1$ in $\Sigma_m$. Then since $\Sigma_m$ is a Cayley graph on $\Alt(H)_1$, it follows that, for each $g\in\Alt(H)_1$, the stabilizer of $g$ in $G$ has $\{xyg,xzg\}$ and $\{(xy)^{-1}g,(xz)^{-1}g\}$ as the orbits on the neighborhood of $g$ in $\Sigma_m$. Hence the Cayley digraphs $\Cay(\Alt(H)_1,\{xy,xz\})$ and $\Cay(\Alt(H)_1,\{(xy)^{-1},(xz)^{-1}\})$ are of different orientations given by the half-arc-transitive graph $\Sigma_m$.

Let $C_1$ be the alternating cycle of the tetravalent half-arc-transitive graph $\Sigma_m$ containing the edge $\{1,xy\}$. Then $C_1$ consists of vertices
\[
1,xy,xy(xz)^{-1},xy(xz)^{-1}xy,\dots,(xy(xz)^{-1})^{r-1},(xy(xz)^{-1})^{r-1}xy,
\]
where $r$ is the smallest positive integer such that $(xy(xz)^{-1})^r=1$. Note that $(xy(xz)^{-1})^i=x(yz)^ix^{-1}$ for all integers $i\geqslant0$. We deduce from Lemma~\ref{lem9}(c) that $r=6$. Hence $\Sigma_m$ has attachment number $6$, and the alternating cycle $C_1$ has vertex set
\[
\{x(yz)^ix^{-1},x(yz)^iy\mid i=0,1,2,3,4,5\}.
\]
Similarly, the alternating cycle $C_2$ of $\Sigma_m$ containing the edge $\{1,(xy)^{-1}\}$ has vertex set
\[
\{(yz)^j,(yz)^jyx^{-1}\mid j=0,1,2,3,4,5\}.
\]
From Lemma~\ref{lem21} we see that $C_1$ and $C_2$ has the only common vertex $1$. Thus $\Sigma_m$ has attachment number $1$. Then as $\Gamma_m\cong\Sigma_m$, we conclude that $\Gamma_m$ has radius $6$ and attachment number $1$, which completes the proof.
\end{proof}

Now we know that $\Gamma_m$ is a half-arc-transitive graph. Thus it gives rise to two digraphs $\calD_1(\Gamma_m)$ and $\calD_2(\Gamma_m)$ of opposite orientations. In the next proposition we prove the properties of $\calD_1(\Gamma_m)$ and $\calD_2(\Gamma_m)$ claimed in Theorem~\ref{thm1}(e). The proof of the $(m+6)$-arc-transitivity follows from the argument in~\cite[p.~190--191]{SV2016} and is given by an anonymous referee.

\begin{proposition}\label{prop6}
The digraphs $\calD_1(\Gamma_m)$ and $\calD_2(\Gamma_m)$ are $(m+6)$-arc-transitive and not self-reverse.
\end{proposition}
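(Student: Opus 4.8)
The plan is to handle the two assertions separately, since they are of very different character: the ``not self-reverse'' part is soft and uses only that $\Gamma_m$ is half-arc-transitive, while the $(m+6)$-arc-transitivity is the genuine content and rests on the fixed-point computations of Lemmas~\ref{lem9} and~\ref{lem2}. For the non-self-reverse assertion, first note that the reverse of $\calD_1(\Gamma_m)$ is exactly $\calD_2(\Gamma_m)$, since reversing every arc of $\calO_1(\Gamma_m)$ produces $\calO_2(\Gamma_m)$; hence it suffices to show $\calD_1(\Gamma_m)\not\cong\calD_2(\Gamma_m)$. Suppose $\phi$ were such an isomorphism. Because $\calD_1(\Gamma_m)$ and $\calD_2(\Gamma_m)$ have the same underlying undirected graph $\Gamma_m$, the map $\phi$ preserves the edge set of $\Gamma_m$ and so lies in $\Aut(\Gamma_m)$. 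By Proposition~\ref{prop4} we have $\Aut(\Gamma_m)=\rho(\Alt(H))$, and by Proposition~\ref{prop5} this group is half-arc-transitive, so $\calO_1(\Gamma_m)$ and $\calO_2(\Gamma_m)$ are its two \emph{distinct} orbits on arcs. Then every element of $\Aut(\Gamma_m)$ preserves $\calO_1(\Gamma_m)$ setwise, whereas $\phi$ carries the arc set $\calO_1(\Gamma_m)$ of $\calD_1(\Gamma_m)$ onto the arc set $\calO_2(\Gamma_m)$ of $\calD_2(\Gamma_m)$, a contradiction. Hence $\calD_1(\Gamma_m)$, and symmetrically $\calD_2(\Gamma_m)$, is not self-reverse.

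For the $(m+6)$-arc-transitivity I would follow the reduction of~\cite[p.~190--191]{SV2016}. By Proposition~\ref{prop4}, $\Aut(\Gamma_m)=\rho(\Alt(H))$ acts on $\calD_1(\Gamma_m)$ preserving the orientation, vertex-transitively and transitively on arcs, with each vertex of out-valency $2$; the two out-neighbours of $v_0=R(H)$ are $R(H)xy$ and $R(H)xz$. Consequently the directed $(m+6)$-arcs starting at $v_0$ are indexed by the $2^{m+6}$ sequences $(s_1,\dots,s_{m+6})\in\{xy,xz\}^{m+6}$, the arc for $(s_1,\dots,s_{m+6})$ visiting $v_i=R(H)s_i\cdots s_1$. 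Since the vertex stabiliser $\Aut(\Gamma_m)_{v_0}=\rho(R(H))$ has order $|H|=2^{m+6}$, it permutes this set of $2^{m+6}$ arcs, and by vertex-transitivity it suffices to prove this action transitive. As the two cardinalities agree, transitivity is equivalent to producing a \emph{single} directed $(m+6)$-arc whose pointwise stabiliser in $\rho(R(H))$ is trivial; all lower arc-transitivities then follow automatically from vertex-transitivity.

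The pointwise stabiliser of the arc indexed by $(s_1,\dots,s_{m+6})$ is $\bigcap_{i=0}^{m+6}R(H)^{w_i}$, where $w_0=1$ and $w_i=s_i\cdots s_1$, because $R(H)^{w_i}$ is precisely the stabiliser of the vertex $v_i$. Here $R(H)^{w_0}=R(H)$ has order $2^{m+6}$, and at each step the index $[\,\bigcap_{j<i}R(H)^{w_j}:\bigcap_{j\leqslant i}R(H)^{w_j}\,]$ is at most $2$, since $v_i$ is one of the two out-neighbours of $v_{i-1}$; thus the chain reaches the trivial group after $m+6$ halvings precisely when every step is strict. The task is therefore to choose $(s_1,\dots,s_{m+6})$ and check that each of the $m+6$ intersections drops by a factor of exactly $2$. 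Using $x^{-1}R(H)x=R(H)$ one rewrites each $R(H)^{w_i}$ through conjugates by words in $y$ and $z$; the orders of the successive intersections are then governed by the order and fixed-point sets of $yz$ in Lemma~\ref{lem9} (via $|yz|=6$, matching the radius~$6$ of Proposition~\ref{prop5}, this controls the first six steps) together with intersection-order computations of the type recorded in Lemma~\ref{lem2}, whose case analysis modulo $4$ accounts for the remaining $m$ steps arising from the factor $E\cong\C_2^m$.

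The main obstacle is exactly this last verification. Because the bound $m+6=\log_2|\rho(R(H))|$ is sharp there is no slack: \emph{every} one of the $m+6$ steps must strictly halve the stabiliser, and the argument must be uniform in $m$. The delicate bookkeeping lies in disentangling the contribution of the two dihedral factors of $H$ (the first six steps, controlled by $yz$) from that of the elementary abelian factor $E$ (the remaining $m$ steps, controlled by the residue-class computations of Lemma~\ref{lem2}); the fixed-point lemmas of Section~\ref{sec2} are tailored precisely to supply the orders needed at each stage.
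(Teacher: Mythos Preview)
Your proof that $\calD_1(\Gamma_m)$ and $\calD_2(\Gamma_m)$ are not self-reverse is correct and is essentially the paper's argument.

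For the $(m+6)$-arc-transitivity, however, you have set out on an unnecessarily laborious route and, as you yourself acknowledge, left it unfinished: the ``main obstacle'' of verifying that each of the $m+6$ successive intersections strictly halves is never carried out, and the fixed-point lemmas you invoke (Lemmas~\ref{lem9} and~\ref{lem2}) are used in the paper for entirely different purposes (Propositions~\ref{prop3} and~\ref{prop5}) rather than to control a stabiliser chain of the kind you describe. The paper's argument---which \emph{is} the one from~\cite[p.~190--191]{SV2016} that you cite, so you appear to have misremembered its content---avoids all computation. Let $t$ be the \emph{largest} integer for which $G=\Aut(\Gamma_m)$ is $t$-arc-transitive on $\calD_i(\Gamma_m)$. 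The pointwise stabiliser $G_{v_0\dots v_t}$ of a $t$-arc cannot be transitive on the two out-neighbours of $v_t$ (else $G$ would be $(t+1)$-arc-transitive), and since the out-valency is $2$ it therefore fixes both; iterating along directed paths and using that $\calD_i(\Gamma_m)$ is strongly connected (as $\Gamma_m$ is connected and $\calD_i(\Gamma_m)$ is vertex-transitive) yields $G_{v_0\dots v_t}=1$. Hence $G_{v_0}$ acts regularly on the $2^t$ directed $t$-arcs from $v_0$, giving $2^t=|G_{v_0}|=2^{m+6}$ by Proposition~\ref{prop5}. The idea you missed is that taking $t$ maximal, together with out-valency $2$ and strong connectivity, forces the pointwise stabiliser of \emph{every} $t$-arc to be trivial automatically; there is no need to exhibit a specific arc and track $m+6$ explicit intersections.
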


\begin{proof}
The digraphs $\calD_1(\Gamma_m)$ and $\calD_2(\Gamma_m)$ have the same vertex set as $\Gamma_m$, and have the arc sets the two orbits of $\Aut(\Gamma_m)$ on the arc set of $\Gamma_m$. The half-arc-transitivity of $\Gamma_m$ implies that there is no automorphism of $\Gamma_m$ interchanging these two orbits. Hence $\calD_i(\Gamma_m)$ has the same automorphism group as $\Gamma_m$, and there is no isomorphism between $\calD_1(\Gamma_m)$ and $\calD_2(\Gamma_m)$. Since $\calD_1(\Gamma_m)$ and $\calD_2(\Gamma_m)$ are the reverse of each other, it follows that none of $\calD_1(\Gamma_m)$ and $\calD_2(\Gamma_m)$ is self-reverse.

Let $G=\Aut(\Gamma_m)$. Then $G$ is the automorphism group of $\calD_i(\Gamma_m)$ for $i=1,2$. Since the digraph $\calD_i(\Gamma_m)$ is vertex-transitive with the underlying graph $\Gamma_m$ connected, we see that $\calD_i(\Gamma_m)$ is strongly connected. Let $t$ be the largest integer such that $G$ is $t$-arc-transitive on $\calD_i(\Gamma_m)$. Take a $t$-arc $(v_0,v_1,\dots,v_t)$ in $\calD_i(\Gamma_m)$, and denote by $G_{v_0v_1\dots v_t}$ the point-wise stabilizer of this arc in $G$. If $G_{v_0v_1\dots v_t}$ acts transitively on the out-neighborhood of $v_t$, then $G$ is $(t+1)$-arc-transitive on $\calD_i(\Gamma_m)$, contradicting the maximality of $t$. Hence $G_{v_0v_1\dots v_t}$ fixes each of the two out-neighbours of $v_t$. Since $\calD_i(\Gamma_m)$ is strongly connected, we then have $G_{v_0v_1\dots v_t}=1$ and so $|G_{v_0}|=2^t$. As Proposition~\ref{prop5} shows $|G_{v_0}|=2^{m+6}$, it follows that $t=m+6$. Thus the proposition holds.
\end{proof}

We are now in a position to prove Theorem~\ref{thm1}.

\begin{proof}[Proof of Theorem~\ref{thm1}]
Let $H$, $x$, $y$, $z$ be as in Section~\ref{sec2} and $\Gamma_m$ as in Construction~\ref{con1}. Note that $\Alt(H)\cong\A_{2^{m+6}}$ and $\Alt(H)_1\cong\A_{2^{m+6}-1}$. Then Theorem~\ref{thm1}(a) follows from Propositions~\ref{prop2} and~\ref{prop3}, and Theorem~\ref{thm1}(c) follows from Proposition~\ref{prop4}. Moreover, Proposition~\ref{prop5} asserts parts~(b) and~(d) of Theorem~\ref{thm1}. In particular, $\Gamma_m$ is a tetravalent half-arc-transitive graph, which gives rise to two digraphs $\calD_1(\Gamma_m)$ and $\calD_2(\Gamma_m)$ of opposite orientations. By Proposition~\ref{prop6}, $\calD_1(\Gamma_m)$ and $\calD_2(\Gamma_m)$ satisfy part~(e) of Theorem~\ref{thm1}. This completes the proof.
\end{proof}

\vskip0.1in
\noindent\textsc{Acknowledgement.} The author is in a deep debt of gratitude to the anonymous referees for their valuable suggestions that have helped to improve the paper.

\end{document}